\documentclass[11pt]{amsart}
\usepackage[dvipsnames]{xcolor}
\usepackage{amssymb,amsmath,amsthm,enumerate,mathtools,stmaryrd}
\usepackage[new]{old-arrows}
\usepackage{tikz-cd} 
\usepackage{comment}

\usepackage{mathptmx}
\usepackage[utf8]{inputenc}
\usepackage{bm}			

\usepackage{hyperref}
\hypersetup{%
	colorlinks = true,
	linkcolor = BrickRed,
	citecolor = Green,
	urlcolor   = blue,
  filecolor = red,
}

\usepackage{cleveref}
\usepackage[inline]{enumitem}
\usepackage[margin=0.9in]{geometry}
\usepackage{parskip}

\usepackage[backend=biber,style=alphabetic,doi=false,isbn=false,url=false,eprint=true,maxbibnames=10,minbibnames=3,mincitenames=3,maxcitenames=3,maxalphanames=3,minalphanames=3,backref=true]{biblatex}
\DeclareFieldFormat[misc]{title}{\mkbibquote{#1}}
\DeclareFieldFormat{extraalpha}{#1}
\DeclareLabelalphaTemplate{%
  \labelelement{%
    \field[final]{shorthand}
    \field{label}
    \field[strwidth=3,strside=left,ifnames=1]{labelname}
    \field[strwidth=1,strside=left]{labelname}
  }
}
\DefineBibliographyStrings{english}{%
  backrefpage={},
  backrefpages={}
}

\usepackage{xpatch}
\DeclareFieldFormat{backrefparens}{\addperiod#1}
\xpatchbibmacro{pageref}{parens}{backrefparens}{}{}

\DeclareMathAlphabet{\mathcal}{OMS}{cmsy}{m}{n} 

\newtheorem{thm}{Theorem}[section]
\newtheorem{lem}[thm]{Lemma}
\newtheorem{obs}[thm]{Observation}
\newtheorem{prop}[thm]{Proposition}
\newtheorem{cor}[thm]{Corollary}
\newtheorem{por}[thm]{Porism}

\theoremstyle{definition}
\newtheorem{rem}[thm]{Remark}
\newtheorem{ex}[thm]{Example}

\numberwithin{equation}{section}

\crefname{thm}{theorem}{theorems}
\crefname{rem}{remark}{remarks}
\crefname{prop}{proposition}{propositions}
\crefname{lem}{lemma}{lemmas}
\crefname{por}{porism}{porisms}
\crefname{cor}{corollary}{corollaries}
\crefname{identity}{identity}{identities}
\crefname{equation}{}{}
\crefformat{section}{\S#2#1#3}

\AtBeginEnvironment{rem}{%
  \pushQED{\qed}%
}
\AtEndEnvironment{rem}{\popQED}

\AtBeginEnvironment{ex}{%
  \pushQED{\qed}%
}
\AtEndEnvironment{ex}{\popQED}

\DeclareMathOperator{\htt}{ht} 
\DeclareMathOperator{\rank}{rank} 
\DeclareMathOperator{\Hom}{Hom}
\DeclareMathOperator{\Ext}{Ext}
\DeclareMathOperator{\GL}{GL}
\DeclareMathOperator{\ann}{ann}
\DeclareMathOperator{\diag}{diag}
\DeclareMathOperator{\II}{I}
\DeclareMathOperator{\CC}{C}
\DeclareMathOperator{\inn}{in}
\DeclareMathOperator{\Cl}{Cl}
\DeclareMathOperator{\Frac}{Frac}
\DeclareMathOperator{\coker}{coker}
\DeclareMathOperator{\length}{length}
\DeclareMathOperator{\tors}{tors}
\DeclareMathOperator{\tf}{tf}
\newcommand{\val}{\nu}

\DeclareRobustCommand{\onto}{\relbar\joinrel\twoheadrightarrow}
\newcommand{\fa}{\mathfrak{a}}
\newcommand{\fb}{\mathfrak{b}}
\newcommand{\fc}{\mathfrak{c}}
\newcommand{\fq}{\mathfrak{q}}
\newcommand{\fp}{\mathfrak{p}}
\newcommand{\fm}{\mathfrak{m}}
\newcommand{\fQ}{\mathfrak{Q}}
\newcommand{\OO}{\mathcal{O}}
\newcommand{\VV}{\mathcal{V}}
\newcommand{\vp}{\varpi}
\renewcommand{\aa}{\pmb{a}}
\newcommand{\cc}{\pmb{c}}
\newcommand{\JJ}{\pmb{J}}
\newcommand{\JJA}{\pmb{J}_{A}}

\newcommand{\Grobner}{Gr\"obner}

\newcommand{\IKM}[1][]{\cite[#1]{IKM:Congruence}}
\newcommand{\deff}[1]{{\color{Cerulean}#1}}
\newcommand{\andd}{\quad\text{and}\quad}
\newcommand{\smatrix}[1]{\left[\begin{smallmatrix} #1 \end{smallmatrix}\right]}

\let\subset\subseteq
\let\supset\supseteq
\let\ge\geqslant
\let\le\leqslant

\let\to\longrightarrow

\begin{document}

\title%
[Congruence modules of determinantal rings]%
{Congruence modules and Wiles defects of \\ determinantal rings of maximal minors}

\author{Kashif Khan}
\author{Aryaman Maithani}
\address{Department of Mathematics, University of Utah, 155 South 1400 East, Salt Lake City, UT~84112, USA}
\email{kkhan@math.utah.edu}
\email{maithani@math.utah.edu}

\thanks{A.M. was supported by NSF grants DMS~2101671 and DMS~2349623 and a Simons Dissertation Fellowship.}

\subjclass[2020]{13C40}

\keywords{Congruence modules, Wiles defect, determinantal rings}

\begin{abstract}
	Let $\OO$ be a discrete valuation ring and $A \coloneqq \OO[X_{m \times n}]/\II_{m}(X)$ the determinantal ring of maximal minors. 
	We consider algebra maps $\lambda \colon A \to \OO$, which is tantamount to choosing rank-deficient matrices $\aa \in \OO^{m \times n}$. 
	Following Iyengar--Khare--Manning, we compute the congruence module and the Wiles defect of $A$ at~$\lambda$, expressing them in terms of the $(m - 1)$-sized minors of $\aa$.
\end{abstract}

\maketitle

\section{Introduction} \label{sec:introduction}

	Let $\OO$ be a discrete valuation ring, 
	and $A$ an $\OO$-algebra equipped with a map
	$\lambda \colon A \onto \OO$ of $\OO$-algebras. 
	We set $\fp \coloneqq \ker \lambda$, and let $\fm$ denote the unique maximal ideal of $A$ containing $\fp$. 
	We say that \deff{$A$ is regular at $\lambda$} if 
	$A_{\fp}$ is regular. 
	Following \IKM, we let $\Psi_{\lambda}(A)$ and $\delta_{\lambda}(A)$ denote the congruence module and the Wiles defect of $A_{\fm}$ at $\lambda$, respectively; 
	see \Cref{sec:background} for the definitions. 
	The purpose of this brief manuscript is to compute them for the determinantal rings of maximal minors: 

	\begin{thm} \label{thm:main}
		Let $\OO$ be a discrete valuation ring. 
		Let $2 \le m \le n$ be integers and 
		$A \coloneqq \OO[X_{m \times n}]/\II_{m}(X)$ the determinantal ring of maximal minors. 
		Consider a point $\aa \in \OO^{m \times n}$ with $\II_{m}(\aa) = 0$;
		such a point defines a map $\lambda_{\aa} \colon A \onto \OO$. 
		The algebra $A$ is regular at $\lambda_{\aa}$ if and only if $\rank \aa = m - 1$, in which case, we have
		\begin{equation*} 
			\Psi_{\lambda_{\aa}}(A) 
			\cong 
			\OO/\II_{m - 1}(\aa)
			\andd
			\delta_{\lambda_{\aa}} (A)
			=
			(n - m) w_{\aa},
		\end{equation*}
		where $w_{\aa} = \length_{\OO} (\OO/\II_{m - 1}(\aa))$.
	\end{thm}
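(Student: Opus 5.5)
We first reduce to a normal form using the symmetry group. The group $\GL_m(\OO)\times\GL_n(\OO)$ acts on $\OO[X]$ by $X\mapsto gXh$. This action preserves $\II_m(X)$, so it induces $\OO$-algebra automorphisms of $A$. Such an automorphism carries $\lambda_{\aa}$ to $\lambda_{g\aa h}$. Congruence modules and Wiles defects are invariant under isomorphisms of the pair $(A,\lambda)$, and the ideals $\II_{m-1}(\aa)$ and the rank are unchanged by this action. By the Smith normal form over the DVR $\OO$, we may therefore assume $\aa$ is ``diagonal'' with entries $\vp^{e_1},\dots,\vp^{e_r},0,\dots,0$, where $r=\rank\aa\le m-1$.

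\textbf{Regularity.} After inverting $\vp$, $A_{\fp}$ is a localization of $K[X]/\II_m(X)$, where $K=\Frac\OO$. The latter ring is singular exactly along the locus of rank $\le m-2$ and smooth at rank-$(m-1)$ points. This gives the ``iff'' statement, since $\fp$ lies over the generic point of $\OO$ and $\rank$ is computed over $K$.

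Now suppose $r=m-1$. Then $\II_{m-1}(\aa)=(\vp^{w})$ with $w=\sum e_i=w_{\aa}$.

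\textbf{Congruence module.} We use the description from \Cref{sec:background}: $\Psi_\lambda(A)$ is $\lambda(\ann_A\fp)$ when $A$ is Gorenstein, and in general it is defined via the dualizing module. For maximal minors, $A$ is Cohen--Macaulay but not Gorenstein when $n>m$. So I would instead compute $\Psi$ via a deformation: $\II_m(X)$ is generated by the maximal minors, and the Eagon--Northcott complex resolves $A$ over $P=\OO[X]$. Alternatively, use the formula of \IKM{} for $\Psi$ when $A$ is a quotient of a regular ring of codimension $c=n-m+1$. That formula expresses $\Psi$ through the image of $\lambda$ applied to the ``Jacobian'' fitting ideal: $\Psi_\lambda(A)\cong\OO/\lambda(\mathrm{Fitt}_{0}(\Omega\text{-type module}))$, namely the ideal generated by $c\times c$ minors of the Jacobian matrix of the generators evaluated at $\aa$, modulo the correction given by the rank-one free part.

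Concretely, the partial derivative of an $m$-minor with respect to $x_{ij}$ is a signed $(m-1)$-minor. Evaluated at $\aa$ in normal form, all $(m-1)$-minors vanish except the one on the first $m-1$ rows and columns, which has value $\vp^{w}$. Hence the Jacobian at $\aa$ has a very sparse shape. The minors involving row $m$ and columns $m,\dots,n$ give $c$ generators whose Jacobian rows are $\vp^{w}$ times distinct unit vectors. The $c\times c$ Jacobian minor is therefore $\vp^{cw}$. This yields the cotangent module $\fp/\fp^2\otimes\OO$ with torsion part $(\OO/\vp^{w})^{c}$, whose length is $(n-m+1)w$. The Wiles defect formula of \IKM{} then reads $\delta=\length(\text{cotangent torsion})-\length\Psi$, which gives $(n-m+1)w-w=(n-m)w$.

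\textbf{Main obstacle.} The hard step is computing $\Psi$ itself, since $A$ is not Gorenstein and the cotangent computation only gives the torsion of $\Phi$. My plan is to use that $A$ is a specialization of a generic construction, or to reduce by localizing. Inverting the $(m-1)$-minor $x_{11\cdots}$ of the first block exhibits $A$ near $\aa$ as a polynomial ring. Equivalently, $A[\Delta^{-1}]\cong \OO[\text{free vars}][\Delta^{-1}]$, but $\Delta$ is not a unit at $\lambda$, so this alone does not suffice. Instead I would compute the dualizing module $\omega_A$ explicitly: it is the ideal of $(m-1)$-minors of the first $m-1$ rows raised to the power $n-m$, or its symmetric-power description. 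I would then evaluate the pairing $\omega_A\to\omega_A\otimes\OO$ defining $\Psi$, and check that its image is $\vp^{w}$ times the generator. This explicit identification of $\omega_A$ and of the map is where I expect the real work to lie.
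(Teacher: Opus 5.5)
Your regularity argument and your computation of $\Phi_{\lambda_{\aa}}(A)$ are correct. In Smith normal form, only the minors on columns $\{1,\dots,m-1,j\}$, $m\le j\le n$, have nonzero gradient at $\aa$, namely $\pm\vp^{w}$ times the coordinate vector of $X_{mj}$, so $\Phi_{\lambda_{\aa}}(A)\cong(\OO/\vp^{w})^{n-m+1}$. The gap is that the actual content of the theorem, $\Psi_{\lambda_{\aa}}(A)\cong\OO/\II_{m-1}(\aa)$, is never established: your Wiles-defect line simply substitutes $\length_{\OO}\Psi_{\lambda_{\aa}}(A)=w$. The ``Jacobian Fitting ideal'' formula you invoke for $\Psi$ is not available for this $A$. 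Jacobian minors compute $\Phi$, whose length agrees with that of $\Psi$ for complete intersections but not here; it would give $(n-m+1)w$ instead of $w$. Your fallback through $\omega_A$ is only an outline: it gives no formula expressing $\Psi$ in terms of $\omega_A$ and no evaluation. The identification is also off. Up to shift, $\omega_A\cong\fq_1^{n-m}$, where $\fq_1$ is generated by the $(m-1)$-minors of the first $m-1$ \emph{columns}; for instance, for $m=2$, $n=3$ one has $\omega_A\cong(X_{11},X_{21})$. The corresponding row ideal represents the opposite class in $\Cl(A)$.

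The missing idea is linkage to a complete intersection, i.e.\ \Cref{thm:blackbox}. You choose a complete intersection $C\onto A$ of the same dimension, regular at the induced point, and compute $\lambda_C(\fc)$ for $\fc=\ann_C(\ker(C\onto A))$. Then $\length_{\OO}\Psi=\length_{\OO}\Phi_{\lambda_C}(C)-\length_{\OO}\OO/\lambda_C(\fc)$. The paper takes $C=P/(f_1,\dots,f_{n-m+1})$, with $f_i$ the maximal minor on the consecutive columns $i,\dots,i+m-1$. It shows by a Gr\"obner argument that $C$ is a reduced complete intersection, and proves $\fc=Q_2\cdots Q_{n-m+1}$. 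The inclusion comes from a pointwise linear-algebra argument. Equality comes from matching Hilbert series via $\omega_A(m(m-1))\cong\fq_1^{n-m}\cong\fq_2\cdots\fq_{n-m+1}$ and the class group, followed by passing from $k$ and $K$ to $\OO$. This route also forces a different normal form than yours. At a Smith normal form point, $C$ is not regular when $n>m$: for $i\ge 2$, columns $i,\dots,i+m-1$ of $\aa$ contain at most $m-2$ nonzero columns, so $f_i$ has zero gradient there and the Jacobian has rank $1<n-m+1$. With the cyclically repeated point \eqref{eq:desc-a}, one gets $\length_{\OO}\Phi_{\lambda_C}(C)=(n-m+1)w$ and $\lambda_C(\fc)=\Delta^{n-m}\OO$. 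Hence $\length_{\OO}\Psi=w$, and the isomorphism follows because $\Psi$ is cyclic.
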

	We prove this theorem in \Cref{sec:main-proof} as \Cref{cor:length-congruence}, 
	recovering \IKM[Proposition~4.2] for $m = 2$.
	The quantity $w_{\aa}$ above may also be interpreted as the smallest valuation of an $(m - 1)$-sized minor of $\aa$; see also \Cref{eq:w-alt} for another reinterpretation.
	We expect the above result to aid in calculations in number theory; 
	the case $m = 2$ already finds application in \IKM[\S5] in computing congruence ideals for Steinberg deformation rings. 

	In \Cref{sec:background}, we recall the relevant definitions and results from \IKM\ that are needed for this paper. 
	In \Cref{sec:setup}, we introduce the auxiliary objects and notation needed for the proof of our result, 
	proving facts about these objects in \Cref{sec:over-fields} under the assumption that we are working over a field. 
	Finally, in \Cref{sec:main-proof}, we adapt these to the case where the base ring is a discrete valuation ring, proving our main result. 

\section*{Acknowledgements}
	
	We thank Srikanth B. Iyengar for several interesting discussions surrounding the material and exposition of the paper, and for suggesting the problem to us.

\section{Background} \label{sec:background}

	We recall the category $\CC_{\OO}$ from \IKM[\S2]:
	the objects are pairs $(A, \lambda)$, where $A$ is a noetherian local $\OO$-algebra equipped with a local map of $\OO$-algebras
	$\lambda \colon A \onto \OO$ such that 
	$A$ is regular at $\lambda$, i.e.,
	$A_{\ker \lambda}$ is a regular ring; 
	the morphisms are the morphisms of $\OO$-algebras over $\OO$. 
	For $c \ge 0$, we denote by $\CC_{\OO}(c)$ the full subcategory of $\CC_{\OO}$ consisting of the pairs $(A, \lambda)$ with $\htt(\ker \lambda) = c$. 
	For an $\OO$-module $V$, we let $\tors(V)$ denote the torsion submodule, and
	$V^{\tf} \coloneqq V/\tors(V)$ the torsionfree quotient. 
	Fixing $(A, \lambda)$ in $\CC_{\OO}$ and setting $\fp \coloneqq \ker \lambda$, 
	we recall the related constructions from ibid.: 
	\begin{enumerate}[label=(\roman*)]
		\item 
		the \deff{congruence module} $\Psi_{\lambda}(A)$
		is the cokernel of the natural map
		$\Ext_{A}^{c}(\OO, A) \to \Ext_{A}^{c}(\OO, \OO)^{\tf}$,
		\item 
		the \deff{cotangent module} of $A$ is $\fp/\fp^{2}$, and we set
		$\Phi_{\lambda}(A) \coloneqq \tors(\fp/\fp^{2})$, 
		and
		\item
		the \deff{Wiles defect} is the integer
		$\delta_{\lambda}(A) \coloneqq 
		\length_{\OO}(\Phi_{\lambda}(A)) - 
		\length_{\OO}(\Psi_{\lambda}(A))$.
	\end{enumerate}

	While we refer the reader to \IKM\ for the significance of the congruence module and the Wiles defect in detecting complete intersections, 
	we now recall the relevant results from that paper that we use for our computations.
	As $(A, \lambda)$ is in $\CC_{\OO}$, the congruence module $\Psi_{\lambda}(A)$ is a cyclic torsion $\OO$-module 
	[ibid.\ Lemma~2.9, Theorem~2.11], and thus is completely determined by its length. 
	Moreover, if $A$ is a complete intersection, this length coincides with that of	$\Phi_{\lambda}(A)$ [ibid.\ Theorem~2.32]. 
	In turn, [ibid.\ Theorem~3.5] yields:
	\begin{thm}[Iyengar--Khare--Manning]
		\label{thm:blackbox} 
		Let $\pi \colon (C, \lambda_{C}) \onto (A, \lambda_{A})$ be a surjection in $\CC_{\OO}(c)$ for some $c \ge 0$. 
		Suppose that $C$ is a complete intersection and $A$ is Cohen--Macaulay. Setting
		$\fc \coloneqq \ann_{C}(\ker \pi)$,
		we have 
		\begin{equation*} 
			\pushQED{\qed} 
			\length_{\OO} \Psi_{\lambda_{A}}(A)
			=
			\length_{\OO} \Phi_{\lambda_{C}}(C)
			-
			\length_{\OO} (\OO/\lambda_{C}(\mathfrak{c})). 
			\qedhere
			\popQED 
		\end{equation*}
	\end{thm}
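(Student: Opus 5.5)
The statement is quoted from \IKM[Theorem~3.5] and is used here as a black box, so what follows is only a sketch of how I would reprove it, not something the paper needs. The plan is to reduce everything to a ``multiplicativity of congruence ideals'' along $\pi$, using Grothendieck duality for the finite map $C \onto A$. Write $\fp_C \coloneqq \ker\lambda_C$ and $\fp_A \coloneqq \ker \lambda_A$. Since both pairs lie in $\CC_{\OO}(c)$ and $C$ is Cohen--Macaulay, I would first check that $\dim A = \dim C$. Then $\ker\pi$ is a height-zero ideal, and for Cohen--Macaulay $A$ linkage theory applies.

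First step: identify the relevant modules.
\begin{itemize}
\item Since $C$ is Gorenstein, $\omega_C = C$, so $\omega_A \cong \Hom_C(A, C) \cong \ann_C(\ker\pi) = \fc$.
\item Adjunction gives $\Hom_C(\OO, C) \cong \Hom_A(\OO, \Hom_C(A,C)) \cong \Hom_A(\OO, \omega_A)$.
\end{itemize}
In both cases, after reducing modulo a regular sequence in $\fp$ of length $c$ that is regular on $\OO$-torsionfree parts, the groups $\Ext^c(\OO, -)$ become $\Hom(\OO,-)$ of the reduced rings. I would use this to express the congruence modules in a form where duality applies:
\begin{itemize}
\item $\Psi_{\lambda_C}(C) \cong \OO/\lambda_C(C[\fp_C])$ for Gorenstein $C$, where $C[\fp_C] \coloneqq \ann_C \fp_C$;
\item $\Psi_{\lambda_A}(A) \cong \OO/\eta_A$ for Cohen--Macaulay $A$, where $\eta_A$ is the image of the natural pairing $\omega_A[\fp_A] \times A \to \OO$.
\end{itemize}
I would take both descriptions from the setup in \IKM[\S2], together with the invariance of $\Psi$ under such reductions.

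Second step: compare the two ideals. Under $\fc \cong \omega_A$, the submodule $C[\fp_C] \subseteq \fc$ is exactly $\omega_A[\fp_A]$, by the adjunction above. The evaluation $\lambda_C$ restricted to $C[\fp_C]$ factors as follows: first the duality pairing into $\eta_A$, then multiplication by a generator of $\lambda_C(\fc)$. I would set this up by choosing a generator of $\fc$ modulo $\fp_C\fc$, which is possible because $\fc_{\fp_C}$ is free of rank one. This would give the factorization
\[
\lambda_C(C[\fp_C]) = \eta_A \cdot \lambda_C(\fc),
\]
and taking lengths yields
\[
\length_{\OO}\Psi_{\lambda_C}(C) = \length_{\OO}\Psi_{\lambda_A}(A) + \length_{\OO}\bigl(\OO/\lambda_C(\fc)\bigr).
\]

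Finally, since $C$ is a complete intersection, \IKM[Theorem~2.32] gives $\length_{\OO}\Psi_{\lambda_C}(C) = \length_{\OO}\Phi_{\lambda_C}(C)$. Substituting this into the previous display gives the asserted formula.

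The main obstacle is the second step, namely making the factorization of ideals precise. One must check that the duality pairing on $\omega_A[\fp_A]$ matches the restriction of $\lambda_C$ to $C[\fp_C] \subseteq \fc$ up to exactly the factor $\lambda_C(\fc)$, with no additional torsion lost. I would handle this by localizing at $\fp_C$, where $C$ and $A$ become regular and the comparison is an equality of rank-one free modules, and then using torsionfreeness over $\OO$ to descend the equality back to $\OO$.
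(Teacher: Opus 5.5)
Your architecture matches what the paper relies on. The paper gives no argument of its own: it imports the formula from Iyengar--Khare--Manning, where Theorem~3.5 compares congruence modules along $\pi$ and Theorem~2.32 replaces $\length_{\OO}\Psi_{\lambda_C}(C)$ by $\length_{\OO}\Phi_{\lambda_C}(C)$ for the complete intersection $C$. Your route is the same: $\fc\cong\omega_A$, $C[\fp_C]=\fc[\fp_C]$, the factorization $\lambda_C(C[\fp_C])=\eta_A\cdot\lambda_C(\fc)$, then Theorem~2.32. However, two steps do not work as written, and one check is missing.

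\textbf{The definition of $\eta_A$ is wrong.} Read literally, your $\eta_A$ is the unit ideal. In the reduced situation, take $A$ finite free over $\OO$ and $\omega_A=\Hom_{\OO}(A,\OO)$. Then $\omega_A[\fp_A]=\OO\lambda_A$, and pairing $\lambda_A$ with $1\in A$ gives $1$. So your formula would force $\Psi_{\lambda_A}(A)=0$. The correct $\eta_A$ is the image of $\omega_A[\fp_A]$ in $(\omega_A\otimes_A\OO)^{\tf}\cong\OO$. Equivalently, it is the image of the pairing $\omega_A[\fp_A]\times A[\fp_A]\to\OO$, which is $\lambda_A(A[\fp_A])$. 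The input you then need is that this congruence ideal of the module $\omega_A$ agrees with that of $A$.

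\textbf{Your remedy for the main obstacle cannot work.} Localizing at $\fp_C$ inverts $\vp$, so every nonzero ideal of $\OO$ becomes the unit ideal there, and no equality of ideals can be descended. Your first suggestion, a generator of $\fc$ modulo $\fp_C\fc$, is the right mechanism once you discard torsion, and it makes the step direct:
\begin{itemize}
\item The restriction of $\lambda_C$ to $\fc$ kills $\fp_C\fc$ and lands in the torsion-free module $\OO$. Hence it factors through $(\fc/\fp_C\fc)^{\tf}$, which is free of rank one. This rank count is the only use of localizing at $\fp_C$.
\item Let $\bar g$ generate this module and let $g\in\fc$ lift it. Then $\lambda_C(\fc)=\lambda_C(g)\OO$.
\item Under $\fc\cong\omega_A$, the submodule $C[\fp_C]=\fc[\fp_C]$ maps onto $\eta_A\bar g$.
\item Therefore $\lambda_C(C[\fp_C])=\eta_A\,\lambda_C(\fc)$, and torsion never interferes.
\end{itemize}

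\textbf{The reduction step needs one more check.} Reducing modulo your regular sequence must not change $\lambda_C(\fc)$. Write $\bar C$ and $\bar A$ for the reduced rings and $\bar\pi$ for the induced surjection. You need the image of $\fc$ in $\bar C$ to equal $\ann_{\bar C}(\ker\bar\pi)$. This holds because $\fc\cong\Hom_C(A,C)$ and $\Ext^1_C(A,C)=0$, since $A$ is maximal Cohen--Macaulay over the Gorenstein ring $C$. This is one of the places where the Cohen--Macaulay hypothesis on $A$ genuinely enters.
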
	

	We may extend the above definitions to not necessarily local $\OO$-algebras in the following manner: 
	if $A$ is an $\OO$-algebra with a map $\lambda \colon A \onto \OO$, 
	then there is a unique maximal ideal $\fm$ containing $\ker \lambda$, 
	and $\lambda$ extends to a local map 
	$\lambda_{\fm} \colon A_{\fm} \onto \OO$.
	We then set 
	$\Psi_{\lambda}(A) \coloneqq \Psi_{\lambda_{\fm}}(A_{\fm})$
	and 
	similarly for
	$\delta_{\lambda}(A)$ and $\Phi_{\lambda}(A)$;
	this is unambiguous as $\fm$ is uniquely determined by $A$ and $\lambda$.
	Moreover, if $(C, \lambda_{C}) \onto (A, \lambda_{A})$ is a (not necessarily local) surjection over $\OO$, 
	then letting $\fm_{C}$ and $\fm_{A}$ denote the respective maximal ideals determined by these maps, 
	there is an induced local surjection 
	$C_{\fm_{C}} \onto A_{\fm_{A}}$ over $\OO$. 
	This localisation will put us in a position to apply \Cref{thm:blackbox}; 
	it happens to be the case that the rings $C$ and $A$ we consider 
	are complete intersections and Cohen--Macaulay not just locally, but globally as well. 

	\begin{rem} \label{rem:cotangent-description}
		We recall a convenient description of the cotangent module. 
		Suppose $P = \OO[x_{1}, \ldots, x_{n}]$ is a polynomial ring and $A = P/(f_{1}, \ldots, f_{m})$ a finitely generated $\OO$-algebra. 
		Let $\JJ$ be the \emph{Jacobian matrix} with respect to $\pmb{f}$, i.e., $\JJ$ is the $n \times m$ matrix over $P$ given by 
		$[\partial f_{j}/\partial x_{i}]_{i j}$.
		Given an algebra map $A \onto \OO$ with kernel~$\fp$, the $\OO$-module $\fp/\fp^{2}$ has the presentation
		\begin{equation*} 
			\begin{tikzcd}
				\OO^{m} 
				\arrow[r, "\lambda(\JJ)"] &
				\OO^{n}
				\arrow[r] &
				\fp/\fp^{2} 
				\arrow[r] &
				0,
			\end{tikzcd}
		\end{equation*}
		where $\lambda$ is the induced map $P \onto \OO$. 
		This follows, for example, from \Cite[\S16.1 and Proposition 16.12]{Eisenbud:CA}; 
		we have simply put together the facts that $\Omega_{A / \OO}$ is presented by the Jacobian, and that $\Omega_{A / \OO} \otimes_{A} \OO \cong \fp/\fp^{2}$. 
	\end{rem}

\section{Notation} \label{sec:setup}
	
	Let $\OO$ be
	a field or a discrete valuation ring.
	We fix integers $2 \le m \le n$, and let
	$P \coloneqq \OO[X_{m \times n}]$ denote the standard graded polynomial ring in $m n$ variables. 
	Letting $t \coloneqq m - 1$, we set
	$I \coloneqq \II_{t + 1}(X)$ to be the ideal of maximal minors of $X$, and $A \coloneqq P/I$ the corresponding determinantal ring. 
	Given integers $1 \le a < b \le n$, we define $X_{[a, b]}$ to be the submatrix of $X$ obtained by selecting columns $a$ through $b$ (inclusive). 
	Consider the sequence of elements $\pmb{f} = f_{1}, \ldots, f_{n - t} \in I$ given by
	\begin{equation*} 
		f_{i} \coloneqq 
		\det X_{[i, i + t]}.
	\end{equation*}
	Let $J \coloneqq (f_{1}, \ldots, f_{n - t}) P \subset I$ be the ideal generated by $\pmb{f}$, and $C \coloneqq P/J$ the corresponding quotient ring. 
	In view of \cite[Theorem~7.3.1]{BrunsHerzog:Book}, we note that $\htt(I) = n - t$.
	We shall show that $\htt(J) = \htt(I)$, yielding that $C$ is a complete intersection of the same dimension as $A$. 
	Consider the canonical projection
	\begin{equation*} 
		\pi \colon C \onto A,
	\end{equation*}	
	and let 
	$\fc \coloneqq \ann_{C}(\ker \pi)$ be the annihilator of the kernel, i.e., 
	$\fc = (0 :_{C} \ker \pi)$. 
	For $1 \le i \le n - t + 1$, consider the (prime) ideal $\fQ_{i}$ of $P$ defined by
	\begin{equation*} 
		\fQ_{i} \coloneqq \II_{t}(X_{[i, i + t - 1]}).
	\end{equation*}
	Let $Q_{i} \coloneqq \fQ_{i} C$ be its image in $C$, 
	and $\fq_{i} \coloneqq Q_{i} A$ its image in $A$.
	Diagrammatically, we have
	\begin{equation*} 
		\begin{tikzcd}
			P \arrow[r, two heads]       & C \arrow[r, two heads]     & A       \\[-2em]
			\fQ_{i} \arrow[r, two heads] & Q_{i} \arrow[r, two heads] & \fq_{i}.
		\end{tikzcd}
	\end{equation*}	

	We shall show that $\fc = Q_{2} \cdots Q_{n-t}$ as ideals in $C$, aiding in our computation of $\Psi_{\lambda}(A)$ using \Cref{thm:blackbox}. 
	While we require this result over a DVR, we first prove it over fields, making use of \Grobner\ bases, graded canonical modules, and the varieties defined by the ideals. 

\section{Computations over fields} \label{sec:over-fields}
	
	Throughout this section, we continue with the notation of \Cref{sec:setup} under the additional assumption that $\OO$ is a field. 
	We may then talk about the graded canonical module $\omega_{R}$ of an $\mathbb{N}$-graded noetherian ring $R$ with $[R]_{0} = \OO$. 
	Recall that for such a ring $R$, 
	the \deff{$a$-invariant} $a_{R}$ is the negative of the minimal degree of a nonzero homogeneous element of $\omega_{R}$.
	We first prove that $C$ is a reduced complete intersection. 

	\begin{lem} \label{lem:C-is-ci-over-k}
		The sequence $\pmb{f}$ is a regular sequence in $P$. 
		Thus, $C$ is a complete intersection of dimension $m n - n + t$. 
		Moreover, $C$ is reduced.		
	\end{lem}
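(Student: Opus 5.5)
Via Gröbner bases. Fix a diagonal term order, say lexicographic with $x_{11} > x_{12} > \cdots > x_{1n} > x_{21} > \cdots > x_{mn}$. Under such an order the initial term of a minor is the product of its main diagonal, so
\begin{equation*}
	\inn(f_{i}) = x_{1,i}\, x_{2,i+1} \cdots x_{m,i+t}.
\end{equation*}
For $i \ne j$ these monomials share no variable: $x_{k,i+k-1} = x_{k',j+k'-1}$ forces $k = k'$ and then $i = j$. So the initial terms are pairwise coprime.

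\textbf{Regular sequence.} By Buchberger's criterion, pairwise coprime initial terms imply that $\pmb{f}$ is a Gröbner basis of $J$, so $\inn(J) = (\inn f_{1}, \ldots, \inn f_{n-t})$. These are monomials in disjoint sets of variables, hence form a regular sequence, and $\htt \inn(J) = n - t$. Since $J$ and $\inn(J)$ have the same Hilbert function, $\htt J = n - t$. In the graded polynomial ring $P$, homogeneous elements generating an ideal of height equal to their number form a regular sequence, because $P$ is Cohen--Macaulay. Thus $\pmb{f}$ is regular and $C$ is a complete intersection of dimension $mn - (n - t) = mn - n + t$. Alternatively, one can lift regularity directly from $\inn(\pmb f)$ by the standard flat-degeneration argument.

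\textbf{Reducedness.} This is the main obstacle, since $\inn(J)$ is not squarefree-helpful here in general; it is squarefree, but that alone gives reducedness only once we know it is radical. In fact it does: $\inn(J)$ is generated by squarefree monomials, hence is radical, and if $\inn(J)$ is radical then $J$ is radical. The reason is that for $g^{k} \in J$ with $g \notin J$, we may reduce $g$ modulo the Gröbner basis to assume no term of $g$ lies in $\inn(J)$. Then $\inn(g)^{k} = \inn(g^{k}) \in \inn(J)$, so $\inn(g) \in \inn(J)$ by radicality, a contradiction. Hence $C$ is reduced.

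If one prefers to avoid this lemma, a fallback is Serre's criterion. $C$ is Cohen--Macaulay, so $(S_1)$ holds, and it suffices to check $(R_0)$: the Jacobian of $\pmb{f}$ has full rank $n - t$ at a generic point of each minimal prime. This route requires identifying the components, for instance $V(I)$ and $V(\fQ_{i} + \fQ_{i+1}\text{-type ideals})$, which is more work. I would therefore use the initial-ideal argument above.
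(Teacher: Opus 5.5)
Your proof is correct and is the paper's argument. The lexicographic (diagonal) order gives $\inn_{<}(f_{i}) = X_{1,i}X_{2,i+1}\cdots X_{m,i+t}$, and these have pairwise disjoint supports, so $\pmb{f}$ is a Gr\"obner basis whose initial ideal is a squarefree monomial complete intersection of height $n-t$; regularity, the dimension and radicality of $J$ all follow from this. You supply the details the paper leaves implicit (the product criterion, equal Hilbert functions, and the normal-form argument that a radical initial ideal forces $J$ radical), and the Serre's-criterion fallback is unnecessary.
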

	\begin{proof} 
		Consider the lexicographic monomial ordering $<$ on $P$ induced by the lexicographic ordering on the variables.
		The initial monomial of $f_{i}$ is the monomial corresponding to the main diagonal of the defining submatrix $X_{[i, i+t]}$, i.e., 
		\begin{equation*} 
			\inn_{<}(f_{i}) = X_{1,i} \, X_{2, i+1} \,\cdots\, X_{m, i + t}.
		\end{equation*}
		In particular, the supports of the initial monomials are pairwise disjoint;
		thus, $\pmb{f}$ is a \Grobner\ basis generating a radical ideal of the claimed height.
	\end{proof}

	\begin{obs}
		As $C$ is a complete intersection obtained by killing $n - t$ elements of degree $m$, we compute its $a$-invariant to be
		$a_{C} = -m n + (n - t)m = -t m$. 
		Thus, $\omega_{C}(t m) \cong C$. 
		\qed
	\end{obs}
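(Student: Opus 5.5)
This observation is a direct computation from two standard facts about graded complete intersections over a field. I will use the standard grading in which each variable $X_{ij}$ has degree $1$, so that each $f_i = \det X_{[i,i+t]}$ is homogeneous of degree $t+1 = m$.

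First I will recall the canonical module of the polynomial ring: $\omega_P \cong P(-mn)$, so $a_P = -mn$, since $P$ is a polynomial ring in $mn$ variables of degree $1$.

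Next I will pass to $C$. By \Cref{lem:C-is-ci-over-k}, $\pmb{f}$ is a homogeneous $P$-regular sequence of length $n-t$ with each $\deg f_i = m$. The canonical module of a quotient by a homogeneous regular sequence is obtained by a degree shift:
\[
\omega_{P/(\pmb f)} \cong \Bigl(\omega_P / (\pmb f)\,\omega_P\Bigr)\Bigl(\textstyle\sum_i \deg f_i\Bigr).
\]
I will justify this by induction on the length of the sequence. Each step uses $\omega_{R/fR} \cong (\omega_R/f\omega_R)(\deg f)$ for a homogeneous nonzerodivisor $f$ on a graded Cohen--Macaulay ring $R$, as in Bruns--Herzog, Corollary~3.6.14. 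Alternatively, one can read it off the Koszul complex, which is a graded free resolution of $C$ over $P$: it gives
\[
\omega_C \cong \Ext_P^{n-t}(C,\omega_P) \cong C\bigl(-mn + (n-t)m\bigr).
\]

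Hence $\omega_C \cong C(-tm)$, because $-mn + (n-t)m = -tm$. This module is generated in degree $tm$, so the minimal degree of a nonzero homogeneous element of $\omega_C$ is $tm$, and therefore $a_C = -tm$. Twisting by $tm$ gives $\omega_C(tm) \cong C$.

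There is no real obstacle here. The only point needing care is the sign convention: $\omega_C \cong C(a_C)$, so that twisting by $-a_C = tm$ recovers $C$.
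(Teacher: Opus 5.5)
Your proposal is correct and takes the same route as the paper. The paper simply invokes the standard fact that a graded complete intersection $P/(f_1,\ldots,f_{n-t})$, over a polynomial ring in $mn$ degree-one variables, has canonical module $C\bigl(\sum_i \deg f_i - mn\bigr) = C(-tm)$; your appeal to Bruns--Herzog's Corollary~3.6.14 (or to the self-dual Koszul complex) just makes this explicit, and you handle the arithmetic and the sign convention $\omega_C \cong C(a_C)$ correctly.
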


	\begin{prop} \label{prop:prod-contained-in-colon}
		As ideals in $P$, we have $\fQ_{2} \cdots \fQ_{n-t} \subset (J :_{P} I)$.
	\end{prop}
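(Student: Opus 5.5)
We argue by induction on $n$, using one cofactor identity at each step. Write $[S]$ for the maximal minor of $X$ on an $m$-element column set $S$. Let $W_{i} \coloneqq \{i, \ldots, i + t - 1\}$, so that $\fQ_{i}$ is generated by the $t$-minors of $X_{W_{i}}$.

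\textbf{Key identity.} For fixed $W = W_{i}$, let $u \in P^{m}$ be the vector of signed $t$-minors of $X_{W}$, i.e.\ the cofactors obtained by expanding an $m \times m$ determinant along one extra column. Then:
\begin{itemize}
\item $u^{T} x_{j} = 0$ for every column $x_{j}$ with $j \in W$;
\item $u^{T} x_{j} = \pm [W \cup \{j\}]$ for $j \notin W$.
\end{itemize}
Now take any $m$-element column set $S$. Multiplying $u^{T}$ by $X_{S} \operatorname{adj}(X_{S}) = [S] \cdot \mathrm{Id}$ gives
\begin{equation*}
[S]\, u^{T} = (u^{T} X_{S}) \operatorname{adj}(X_{S}).
\end{equation*}
Each entry of $u^{T} X_{S}$ is $0$ or $\pm [W \cup \{j\}]$ with $j \in S$. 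Hence
\begin{equation*}
\fQ_{i} \cdot [S] \subset \bigl( [W_{i} \cup \{j\}] : j \in S \setminus W_{i} \bigr).
\end{equation*}

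\textbf{Induction on $n$.} In the base case $n = m = t + 1$ there is a single maximal minor $f_{1}$, so $I = J$ and the product of $\fQ$'s is empty. For $n > m$, let $X'$ be the first $n - 1$ columns of $X$, with associated ideals $I' = \II_{m}(X') \subset I$ and $J' = (f_{1}, \ldots, f_{n-1-t}) \subset J$. The windows $\fQ_{2}, \ldots, \fQ_{n-1-t}$ only involve columns of $X'$. So the induction hypothesis, applied in the polynomial subring and extended to $P$, gives
\begin{equation*}
\fQ_{2} \cdots \fQ_{n-1-t} \, I' \subset J'.
\end{equation*}

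It remains to treat a maximal minor $[S]$ with $n \in S$ (any $[S]$ with $n \notin S$ lies in $I'$ and is already handled). Apply the key identity with the last window $W = W_{n-t} = \{n - t, \ldots, n - 1\}$. This writes $\fQ_{n-t}[S]$ inside the ideal generated by the minors $[W_{n-t} \cup \{j\}]$ with $j \in S$, and there are two kinds of terms:
\begin{itemize}
\item For $j = n$, the minor $[W_{n-t} \cup \{n\}]$ is exactly $f_{n-t} \in J$.
\item For $j < n$, the minor $[W_{n-t} \cup \{j\}]$ uses only columns of $X'$, so it lies in $I'$. Multiplying by $\fQ_{2} \cdots \fQ_{n-1-t}$ then lands in $J' \subset J$ by the induction hypothesis.
\end{itemize}
For $[S] \in I'$, the inclusion $\fQ_{2} \cdots \fQ_{n-t}[S] \subset J$ is immediate from the induction hypothesis, since $\fQ_{n-t}$ is an ideal. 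Together these give $\fQ_{2} \cdots \fQ_{n-t}\, I \subset J$, i.e., $\fQ_{2} \cdots \fQ_{n-t} \subset (J :_{P} I)$.

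\textbf{Main obstacle.} The delicate point is getting the key identity exactly right, including signs and the vanishing for $j \in W$. One also has to check that the index bookkeeping is correct: the window $W_{n-t}$ adjoined with column $n$ must be precisely the last generator $f_{n-t} = \det X_{[n-t, n]}$. Since $W_{n-t} \cup \{n\} = \{n-t, \ldots, n\}$, this holds, and the rest is formal.
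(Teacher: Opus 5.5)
Your proof is correct, and it takes a genuinely different route from the paper's.

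\textbf{The paper's route.} The paper works over a field and passes to the algebraic closure by purity. It then uses that $J$ is radical (\Cref{lem:C-is-ci-over-k}, from the disjoint initial monomials) to reduce $\fQ_{2}\cdots\fQ_{n-t}\,I \subset J$ to the set-theoretic inclusion $\VV(J)\setminus\VV(\fQ_{2}\cdots\fQ_{n-t})\subset\VV(I)$. That inclusion is a short linear-algebra check: if every $t+1$ consecutive columns of a point are dependent while the $t$-windows inside columns $2,\ldots,n-1$ are independent, then every column lies in $\langle\cc_{2},\ldots,\cc_{t+1}\rangle$.

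\textbf{Your route.} You produce explicit ideal membership instead. You combine the cofactor (Plücker-type) relation $\II_{t}(X_{W})\,[S]\subset\bigl([W\cup\{j\}] : j\in S\setminus W\bigr)$ with induction on the number of columns. Peeling off the last window $W_{n-t}$ leaves $f_{n-t}$ as the only minor not already in $I'$. The details check out:
\begin{itemize}
\item the vanishing for $j\in W$;
\item the empty product in the base case $n=m$;
\item the case $n=m+1$, where the hypothesis reads $I'=J'$;
\item the bookkeeping $W_{n-t}\cup\{n\}=\{n-t,\ldots,n\}$.
\end{itemize}

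\textbf{What each buys.} Your version never uses that $\OO$ is a field, that $J$ is radical, or the Nullstellensatz, so it holds over any commutative base ring. In particular it gives $Q_{2}\cdots Q_{n-t}\subset\fc$ directly over the discrete valuation ring. The reverse inclusion in \Cref{cor:c-is-prod-Q-over-DVR} still needs the field-level Hilbert series comparison. The paper's argument is shorter and more geometric. Its cost is relying on radicality of $J$, which the paper has to establish anyway because reducedness of $C$ is used later.
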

	\begin{proof} 
		We wish to prove the containment of ideals
		\begin{equation*} 
			\fQ_{2} \cdots \fQ_{n-t} I \subset J.
		\end{equation*}
		As the ring extension $P \to P \otimes_{\OO} \overline{\OO}^{\text{alg}}$ is pure, we may assume that the field $\OO$ is a algebraically closed. 
		As $J$ is radical by \Cref{lem:C-is-ci-over-k}, it suffices to prove the containment of algebraic sets
		\begin{equation*} 
			\VV(\fQ_{2} \cdots \fQ_{n-t}) \cup \VV(I) 
			\supset 
			\VV(J),
			\qquad
			\text{equivalently,}
			\qquad
			\VV(J) \setminus \VV(\fQ_{2} \cdots \fQ_{n-t})
			\subset \VV(I). 
		\end{equation*}
		Consider a point $\aa \in \OO^{m \times n}$ such that $\aa \in \VV(J)$ and $\aa \notin \VV(\fQ_{j})$ for all $2 \le j \le n - t$. 
		Let $\cc_{1}, \ldots, \cc_{n}$ be the columns of $\aa$. 
		We wish to show that $\aa \in \VV(I)$. 
		This is tantamount to showing that the column span $\langle \cc_{1}, \ldots, \cc_{n} \rangle$ is of dimension at most $t$.
		We now show that every column $\cc_{i}$ is in 
		$\langle \cc_{2}, \ldots, \cc_{t+1} \rangle$, proving the desired statement:
		The hypothesis on $\aa$ tells us that any $t+1$ consecutive columns are linearly dependent, whereas any $t$ consecutive columns among $\cc_{2}, \ldots, \cc_{n-1}$ are linearly independent. 
		This gives us the claim for $i = 1$ directly, and for $i > t+1$, this follows inductively.
	\end{proof}

	Our next goal is to show that the above containment leads to the equality 
	$Q_{2} \cdots Q_{n-t} = \fc$ in $C$. 
	Note that $\fc$ is tautologically annihilated by $\ker \pi$, and thus is an $A$-module. 
	Moreover, as $C$ is reduced, 
	$\fc \cap \ker \pi = 0$, 
	so the ideal $\fc A = \pi(\fc)$ is isomorphic to $\fc$ as a graded $A$-module.

	\begin{lem} \label{lem:omega-is-c}
		We have $\fc A \cong \omega_{A}(-a_{C}) = \omega_{A}(t m)$ as graded $A$-modules.
	\end{lem}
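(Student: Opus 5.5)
The plan is to identify both sides with $\Hom_{C}(A, C)$, using the standard description of the canonical module of a quotient of a Gorenstein ring of the same dimension. The twist is forced by the normalisation $\omega_{C}(tm) \cong C$ from the preceding observation, so I will keep the grading explicit at every step.

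\emph{Step 1: $\fc \cong \Hom_{C}(A, C)$ as graded modules.} A $C$-linear map $A = C/\ker\pi \to C$ is determined by the image $c$ of $1$, and $c$ must satisfy $\ker\pi \cdot c = 0$. So evaluation at $1$ gives a degree-preserving isomorphism $\Hom_{C}(A, C) \cong (0 :_{C} \ker \pi) = \fc$. As noted just before the statement, $C$ is reduced, so $\fc \cap \ker\pi = 0$ and $\fc \cong \fc A$ as graded $A$-modules. It therefore suffices to prove $\Hom_{C}(A, C) \cong \omega_{A}(tm)$.

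\emph{Step 2: rewrite $C$ in terms of $\omega_{C}$.} By the preceding observation, $\omega_{C}(tm) \cong C$ as graded $C$-modules; this holds because $\omega_{C}$ is generated in degree $-a_{C} = tm$. Hence
\[
\Hom_{C}(A, C) \cong \Hom_{C}\bigl(A, \omega_{C}(tm)\bigr) = \Hom_{C}(A, \omega_{C})(tm).
\]

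\emph{Step 3: identify $\Hom_{C}(A, \omega_{C})$ with $\omega_{A}$.} We have $\dim A = \dim C$, since $\htt(I) = n - t = \htt(J)$ by \Cref{lem:C-is-ci-over-k}. The ring $A$ is Cohen--Macaulay by the classical result on maximal-minor determinantal rings (\cite[Theorem~7.3.1]{BrunsHerzog:Book}). For a finite graded map $C \onto A$, there is a graded isomorphism
\[
\omega_{A} \cong \Ext_{C}^{\dim C - \dim A}(A, \omega_{C}) = \Hom_{C}(A, \omega_{C}),
\]
with no shift, because the codimension is $0$. This follows from \cite[Theorem~3.6.19]{BrunsHerzog:Book}, or directly from graded local duality, since $\Hom_{C}(A, \omega_{C})$ computes the graded Matlis dual of $H^{\dim A}_{\fm}(A)$. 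Combining Steps 1--3 gives $\fc A \cong \fc \cong \omega_{A}(tm) = \omega_{A}(-a_{C})$.

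The main point requiring care is the sign of the twist. In particular, Step 3 must be the unshifted graded isomorphism, and Step 2 must read $\omega_{C}(tm) \cong C$, not $\omega_{C} \cong C(tm)$. As a sanity check, $\omega_{C} \cong C(a_{C}) = C(-tm)$ has its generator in degree $tm = -a_{C}$, as it should. Consequently $\fc$ sits inside $\omega_{A}(tm)$ in a way that places its lowest-degree elements in degree $tm - (-a_{A}) = tm + a_{A}$. This is what later lets one compare degrees with $Q_{2}\cdots Q_{n-t}$, which is generated in degree $t(n-t-1)$.
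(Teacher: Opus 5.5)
Your argument is correct and is the paper's own proof: the paper identifies both $\fc$ and $\omega_{A}(tm)$ with $\Hom_{C}(A, C)$, and you have written out the graded identifications it calls clear, namely evaluation at $1$, $\omega_{C}(tm) \cong C$, and $\omega_{A} \cong \Hom_{C}(A, \omega_{C})$ in codimension zero. One slip in your closing remark, which the lemma does not need: since $M(k)_{d} = M_{d+k}$, the lowest degree of $\omega_{A}(tm)$ is $-a_{A} - tm$, not $tm + a_{A}$; with $a_{A} = -tn$ this equals $t(n-m) = t(n-t-1)$, which matches the generating degree of $Q_{2} \cdots Q_{n-t}$, whereas your expression gives $-t(n-m)$.
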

	\begin{proof} 
		By the preceding discussion, it suffices to show that $\fc \cong \omega_{A}(-a_{C})$ as graded $A$-modules. 
		But this is clear, for both the written graded modules are isomorphic to $\Hom_{C}(A, C)$. 
	\end{proof}

	\begin{rem} \label{rem:isomorphic-ideals}
		Suppose $\fa$ and $\fa'$ are graded ideals of $A$ that are isomorphic as (ungraded) $A$-modules.
		Then, as $A$ is a domain, there exists $u \in \Frac(A)$ such that $\fa = u \fa'$. 
		Thus, $\fa \fb \cong \fa' \fb$ for any ideal $\fb \subset A$. 
		Furthermore, if $\fa$ and $\fa'$ are minimally generated in the same degree, then $\fa$ and $\fa'$ are isomorphic as graded $A$-modules.
	\end{rem}

	\begin{lem} \label{lem:omega-is-prod-q}
		There is a graded isomorphism $\omega_{A}(t m) \cong \fq_{2} \cdots \fq_{n-t}$ of $A$-modules.
	\end{lem}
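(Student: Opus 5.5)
The plan is to compare divisor classes in $\Cl(A)$, using the structure theory of determinantal rings of maximal minors due to Bruns--Vetter, and then to fix the grading using \Cref{rem:isomorphic-ideals}. Recall that $A$ is a normal Cohen--Macaulay domain. Moreover, for $\fq$ the prime generated by the $t$-minors of $t$ fixed columns of $X$ (equivalently, the maximal minors of an $m \times t$ column-submatrix), $\Cl(A) \cong \mathbb{Z}$ is generated by $[\fq]$. With the appropriate sign convention, the canonical class is $[\omega_{A}] = (n - m)[\fq]$, by \cite[Theorem~8.8]{BrunsHerzog:Book}-type results (Bruns--Vetter). The number of factors in $\fq_{2} \cdots \fq_{n-t}$ is $n - t - 1 = n - m$, which already matches this class.

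\textbf{Step 1: all the $\fq_{i}$ are isomorphic.} I would show that all the $\fq_{i}$ are isomorphic $A$-modules. The cleanest route is that each $\fq_{i}$ is a height-one prime of $A$ (ideal of maximal minors of an $m\times t$ column-submatrix) whose class is the generator above, independent of which $t$ columns are chosen. Concretely, for two $t$-subsets of columns $S, S'$, multiplying by the ratio $u = \delta_{S}/\delta_{S'} \in \Frac(A)$ of two suitable $t$-minors (taken in a common set of rows) should give $\fq_{S} = u\,\fq_{S'}$. This uses the Plücker-type relations holding in $A$ because all $m$-minors vanish. By \Cref{rem:isomorphic-ideals}, this yields $\fq_{2} \cdots \fq_{n-t} \cong \fq_{2}^{\,n-m}$ as ungraded modules.

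\textbf{Step 2: the power is divisorial.} I would invoke the Bruns--Vetter fact that, in the maximal minors case, the ordinary powers of $\fq_{2}$ coincide with its symbolic powers: $\fq_{2}^{k} = \fq_{2}^{(k)}$. Hence $\fq_{2}^{\,n-m}$ is a divisorial ideal of class $(n-m)[\fq_{2}] = [\omega_{A}]$. Since divisorial ideals in a normal domain are determined up to isomorphism by their class, $\omega_{A} \cong \fq_{2}^{\,n-m} \cong \fq_{2}\cdots\fq_{n-t}$ as ungraded modules. I expect this step, together with getting the sign of the canonical class right (rows versus columns), to be the main obstacle: products of divisorial ideals need not be divisorial, so the equality of ordinary and symbolic powers is essential and must be quoted correctly.

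\textbf{Step 3: the grading.} The canonical module $\omega_{A}$ is isomorphic to a graded ideal, for instance $\fc A$ by \Cref{lem:omega-is-c}. The product $\fq_{2}\cdots\fq_{n-t}$ is minimally generated in degree $t(n-m)$. On the other side, $\omega_{A}(tm)$ is minimally generated in degree $-a_{A} - tm$, and the known value $a_{A} = -n(m-1) = -tn$ gives $tn - tm = t(n-m)$. Alternatively, one can read off this degree from $\fc A$: \Cref{prop:prod-contained-in-colon} gives $\fq_{2}\cdots\fq_{n-t} \subset \fc A$, and since the two are isomorphic ideals, $\fc A = u\,\fq_{2}\cdots\fq_{n-t}$ for some $u \in \Frac(A)$, which forces $u$ to be a unit of degree $0$. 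Either way, \Cref{rem:isomorphic-ideals} upgrades the isomorphism to a graded one, $\omega_{A}(tm) \cong \fq_{2}\cdots\fq_{n-t}$.
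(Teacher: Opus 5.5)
Your proposal is correct and is essentially the paper's argument. Both rest on three ingredients. First, every $\fq_i$ has the same class in $\Cl(A)\cong\mathbb{Z}$, hence they are all isomorphic. Second, $\omega_A$ corresponds to $\fq^{n-m}=\fq^{(n-m)}$. Third, \Cref{rem:isomorphic-ideals} fixes the grading. The one difference is that the paper quotes the graded isomorphism $\omega_{A}(tm)\cong\fq_{1}^{(n-m)}=\fq_{1}^{n-m}$ directly from Bruns--Herzog's $a$-invariants paper, whereas you rebuild it from the canonical class and $a_A=-tn$.

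One small caveat: your ``alternative'' in Step~3 is not independent of $a_A$. The containment $\fq_2\cdots\fq_{n-t}\subset\fc A$ alone does not force $u$ to have degree $0$, so you still need the initial degree $-a_A-tm=t(n-m)$ of $\fc A$.
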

	\begin{proof} 
		By \cite[Corollary~1.6]{BrunsHerzog:a-invariants}, 
		we know that $\omega_{A}(t m) \cong \fq_{1}^{(n - m)} = \fq_{1}^{n - m}$. 
		Thus, it suffices to show that the ideals 
		$\fa \coloneqq \fq_{1}^{n - m}$ and 
		$\fb \coloneqq \fq_{2} \cdots \fq_{n-t}$ 
		are isomorphic as graded $A$-modules. 
		By \cite[Theorem~7.3.5]{BrunsHerzog:Book}, 
		we know that each $\fq_{i}$ represents the same element of the class group $\Cl(A)$. 
		As each $\fq_{i}$ is a height-one prime, 
		we obtain $\fq_{i} \cong \fq_{1}$ as $A$-modules for all $i$. 
		In view of \Cref{rem:isomorphic-ideals}, it follows that $\fa$ and $\fb$ are isomorphic as graded $A$-modules.
	\end{proof}

	\begin{por}
		For any choice of nonnegative integers 
		$k_{1}, \ldots, k_{n-t+1}$ 
		with $\sum k_{i} = n-m$, 
		we have an isomorphism $\omega_{A}(t m) \cong \prod \fq_{i}^{k_{i}}$ of graded $A$-modules. 
		In particular, the product of ideals is unmixed. 
		\qed
	\end{por}

	\begin{cor} \label{cor:c-is-prod-Q-over-field}
		We have $\fc = Q_{2} \cdots Q_{n - t}$ as ideals in $C$. 
	\end{cor}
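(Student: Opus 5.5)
Set $\fb \coloneqq Q_{2} \cdots Q_{n-t} \subset C$. First we show $\fb \subset \fc$, and then we show the two ideals agree in every degree, so they are equal.

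\emph{The containment.} By \Cref{prop:prod-contained-in-colon}, $\fQ_{2} \cdots \fQ_{n-t} \, I \subset J$ in $P$. Passing to $C$, this says $\fb \cdot \ker \pi = \fb \cdot (I/J) = 0$. Hence $\fb \subset (0 :_{C} \ker \pi) = \fc$. Both are graded ideals of $C$, and $\fc \cap \ker \pi = 0$ because $C$ is reduced. Thus $\pi$ restricts to a graded injection on $\fc$, and hence on $\fb$. It is therefore enough to prove $\pi(\fb) = \pi(\fc)$ inside $A$, that is, $\fq_{2} \cdots \fq_{n-t} = \fc A$.

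\emph{Comparing Hilbert functions.} We have $\fq_{2} \cdots \fq_{n-t} = \pi(\fb) \subset \pi(\fc) = \fc A$. By \Cref{lem:omega-is-c} and \Cref{lem:omega-is-prod-q},
\begin{equation*}
	\fc A \cong \omega_{A}(t m) \cong \fq_{2} \cdots \fq_{n-t}
\end{equation*}
as graded $A$-modules. So the two nested graded ideals of $A$ have the same Hilbert function. Each graded piece is a finite-dimensional $\OO$-vector space, so inclusion together with equal dimensions gives equality degree by degree. Hence $\pi(\fb) = \pi(\fc)$. Since $\pi$ is injective on $\fc$, we conclude $\fb = \fc$.

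The main point to check is that the isomorphisms of \Cref{lem:omega-is-c} and \Cref{lem:omega-is-prod-q} are genuinely graded, with the same shift $t m$, so that the Hilbert functions really coincide rather than agreeing only up to a shift. Both lemmas state graded isomorphisms with $\omega_{A}(t m)$, so this holds. The other ingredient needed is $\fc \cap \ker \pi = 0$, which relies on $C$ being reduced (\Cref{lem:C-is-ci-over-k}).
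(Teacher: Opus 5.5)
Your proof is correct and follows the paper's argument: the containment $Q_{2} \cdots Q_{n-t} \subset \fc$ comes from \Cref{prop:prod-contained-in-colon}, and the graded isomorphisms of \Cref{lem:omega-is-c,lem:omega-is-prod-q}, both to $\omega_{A}(tm)$, then give equal Hilbert functions and hence equality over a field. Your explicit use of the injectivity of $\pi$ on $\fc$ (from $\fc \cap \ker \pi = 0$) just spells out the identification of these ideals with $A$-modules, which the paper invokes with ``as noted.''
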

	\begin{proof}
		By \Cref{prop:prod-contained-in-colon}, we have the containment
		$Q_{2} \cdots Q_{n - t} \subset \fc$. 
		As noted, both ideals are also $A$-modules, and we know that they are isomorphic as graded $A$-modules in view of \Cref{lem:omega-is-c,lem:omega-is-prod-q}.
		Thus, their Hilbert series coincide, and hence they must be equal as we are working over a field. 
	\end{proof}

\section{Proof of the main theorem} \label{sec:main-proof}
	
	Let $\OO$ be a discrete valuation ring with uniformiser $\vp$, 
	residue field $k \coloneqq \OO/\vp\OO$, 
	and fraction field $K \coloneqq \OO[\vp^{-1}]$. 
	We continue with the same notation as in \Cref{sec:setup}. 
	An overline indicates the same objects being considered mod $\vp$, i.e., over the field $k$. 
	We let $\val$ denote the normalised valuation on $\OO$, so that 
	$\val(r) \coloneqq \sup \{k \ge 0 : r \in \vp^{k} \OO\}$. 

	We obtain the following immediate corollaries of the results in \Cref{sec:over-fields}. 

	\begin{cor} \label{cor:C-is-ci-over-O}
		The sequence $\pmb{f}$ is a regular sequence in $P$, and
		$C \coloneqq P/J$ is a complete intersection of dimension 
		$m n - n + t + 1$. 
		Moreover, $C$ is reduced. 
	\end{cor}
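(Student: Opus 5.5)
We reduce to the field case via the uniformiser $\vp$, using that $P = \OO[X_{m \times n}]$ is a polynomial ring over a DVR, so $\vp$ is a nonzerodivisor on $P$ and $\overline{P} \coloneqq P/\vp P = k[X_{m \times n}]$.

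\emph{Regular sequence.} We show that $\vp, f_{1}, \ldots, f_{n-t}$ is a regular sequence in $P$, working in the graded setting. The images $\overline{f_{1}}, \ldots, \overline{f_{n-t}}$ in $\overline{P}$ form a regular sequence by \Cref{lem:C-is-ci-over-k}. Since $\vp$ is regular on $P$, the sequence $\vp, \pmb{f}$ is regular. All of $\vp, f_{1}, \ldots, f_{n-t}$ are homogeneous (with $\deg \vp = 0$) and lie in the homogeneous ideal $(\vp) + P_{+}$. We may therefore permute them: for a graded ring with $P_{0} = \OO$ local, homogeneous regular sequences in the ideal generated by the maximal ideal of $\OO$ and $P_{+}$ are permutable, by the graded analogue of the local statement (or by localising at $(\vp) + P_{+}$ and using the standard faithful-flatness argument for graded modules). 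Hence $\pmb{f}, \vp$ is regular, and in particular $\pmb{f}$ is regular on $P$. Alternatively, one can argue directly with the Gröbner basis: the leading coefficients of the $f_{i}$ are $\pm 1$ and their initial monomials have disjoint supports, so $\pmb{f}$ is a Gröbner basis over $\OO$ and the initial ideal is generated by a regular sequence. This yields that $\pmb{f}$ is regular, and also that $C$ is $\OO$-free, so $\vp$ is $C$-regular. Thus $C$ is a complete intersection of dimension $\dim P - (n-t) = mn + 1 - n + t$.

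\emph{Reducedness.} Since $\vp$ is a nonzerodivisor on $C$, we have $C \hookrightarrow C[\vp^{-1}] = C \otimes_{\OO} K$. The latter is $K[X]/J$, which is reduced by \Cref{lem:C-is-ci-over-k} applied over the field $K$. Hence $C$ is reduced. One could alternatively deduce this from $C/\vp C = \overline{C}$ being reduced together with $\vp$ being regular, via a standard graded argument, but the localisation route is cleaner.

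The main point requiring care is the passage from regularity modulo $\vp$ to regularity over $\OO$, which needs either the graded permutability argument or the $\OO$-Gröbner basis observation. I would use the Gröbner argument, since it is self-contained: the leading coefficients are units, so the Buchberger criterion applies over $\OO$ exactly as over a field.
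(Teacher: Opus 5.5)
Your proposal is correct. The regular-sequence part follows the paper's route. The paper's one-line proof likewise deduces from the field case that $\overline{C}$ is a complete intersection of the right dimension, concludes that $\pmb{f}, \vp$ is $P$-regular, and leaves implicit the graded permutability you spell out. The dimension count $\dim P - (n-t)$ is glossed over in both accounts. It is fine because the $f_{i}$ are homogeneous and the dimension of $C$ is attained at the homogeneous maximal ideal $(\vp) + P_{+}$.

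Where you differ is reducedness.
\begin{itemize}
\item \emph{The paper} cites reducedness of the \emph{special} fibre $\overline{C}$. Passing from there to $C$ takes a short extra argument. Suppose $x \in C$ is nilpotent. Then $x \in \vp C$, say $x = \vp x_{1}$. Since $\vp$ is $C$-regular, $x_{1}$ is again nilpotent. Iterating puts $x$ in $\bigcap_{k} \vp^{k} C$, which is $0$ degreewise by the Krull intersection theorem on the finitely generated $\OO$-modules $C_{d}$.
\item \emph{Your route} uses the \emph{generic} fibre. You observe $C \hookrightarrow C[\vp^{-1}] = K[X]/JK[X]$ and apply the field lemma over $K$, which is legitimate since that lemma holds over any field. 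This avoids the Krull-intersection step. It needs only that $\vp$ is $C$-regular, which you already have.
\end{itemize}

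On your preferred Gr\"obner alternative: it does not quite work ``exactly as over a field''. Over $\OO$, Buchberger with unit leading coefficients does show that $C$ is $\OO$-free. However, the step from ``the initial monomials form a regular sequence'' to ``$\pmb{f}$ is regular'' needs a further argument. Options include lifting syzygies \`a la Schreyer, or comparing the Koszul complex of $\pmb{f}$ with its reduction mod $\vp$ and applying graded Nakayama. Your permutation argument already covers this, so it is not a gap in the proposal.
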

	\begin{proof} 
		By \Cref{lem:C-is-ci-over-k}, 
		we know that $\overline{C} = C/\vp C$ is a reduced complete intersection of the correct dimension, 
		showing that the extended sequence $\pmb{f}, \vp$ is $P$-regular. 
	\end{proof}

	\begin{cor} \label{cor:c-is-prod-Q-over-DVR}
		We have $\fc = Q_{2} \cdots Q_{n - t}$ as ideals in $C$.
	\end{cor}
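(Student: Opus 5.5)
We reduce the statement over $\OO$ to \Cref{cor:c-is-prod-Q-over-field} by working modulo $\vp$. Set $\fb \coloneqq Q_{2} \cdots Q_{n-t} \subset C$. The proof of \Cref{prop:prod-contained-in-colon} does not extend verbatim, since it relied on $\OO$ being a field. Still, the containment $\fQ_{2}\cdots\fQ_{n-t} I \subset J$ is an equality of ideals generated by explicit integer polynomials. The generators of $\fQ_{i}$, $I$ and $J$ are minors, defined over $\mathbb{Z}$, and everything is compatible with base change. So it suffices to prove the containment over $\mathbb{Z}[X]$.

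\emph{Containment $\fb \subset \fc$.} Apply \Cref{prop:prod-contained-in-colon} over $\mathbb{Q}$ and over each $\mathbb{F}_{p}$. This shows that every product $g h$, with $g \in \fQ_{2}\cdots\fQ_{n-t}$ and $h \in I$, lies in $J\mathbb{Q}[X]$ and in $J\mathbb{F}_{p}[X]$ for all $p$. Since $\pmb{f}$ is a regular sequence on $\mathbb{Z}[X]$ (the initial-term argument of \Cref{lem:C-is-ci-over-k} works over $\mathbb{Z}$), $\mathbb{Z}[X]/J$ is $\mathbb{Z}$-flat. Hence $(J\mathbb{Q}[X] \cap \mathbb{Z}[X]) / J$ is $\mathbb{Z}$-torsion in a torsionfree module, so $J\mathbb{Q}[X] \cap \mathbb{Z}[X] = J$. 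Base changing to $\OO$ then gives $\fb \cdot \ker\pi = 0$, that is, $\fb \subset \fc$.

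\emph{Equality.} I will compare both ideals after reducing modulo $\vp$. First I need $\fc$ to behave well under reduction. Since $C$ is Gorenstein and $A$ is Cohen--Macaulay of the same dimension, $\fc \cong \Hom_{C}(A,C) = \omega_{A}$ up to twist, and it is a maximal Cohen--Macaulay $C$-module. Also $A$ is $\OO$-flat, because $\overline{A}$ is a domain of the right dimension. It follows that $\Hom_{C}(A,C) \otimes \OO/\vp \cong \Hom_{\overline{C}}(\overline{A},\overline{C})$, hence $\fc/\vp\fc \cong \overline{\fc}$, the ideal of \Cref{sec:over-fields}.

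Next I need the same for $\fb$. The quotient $C/\fb$ should be $\OO$-torsionfree; this is the step I expect to be the main obstacle. One way to get it is to show that $\fQ_{2}\cdots\fQ_{n-t}+J$ has a Gröbner basis over $\mathbb{Z}$ with unit leading coefficients. A cleaner route is the following. The cokernel $M \coloneqq \fc/\fb$ is a finitely generated graded $C$-module, and I claim $M = 0$. By graded Nakayama it suffices to show $M/\vp M = 0$. The map $\fb \to \fc \to \overline{\fc}$ has image $\overline{\fb} = \overline{Q}_{2}\cdots\overline{Q}_{n-t}$, which equals $\overline{\fc}$ by \Cref{cor:c-is-prod-Q-over-field}. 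Hence $\fc = \fb + \vp\fc$ inside $\fc$, so $M = \vp M$. Since $M$ is graded with $[\OO]$ in degree zero and $\vp$ is of degree zero, I will apply Nakayama degreewise: each graded piece $M_{d}$ is a finitely generated $\OO$-module with $M_{d} = \vp M_{d}$, hence $M_{d}=0$. This avoids proving torsionfreeness of $C/\fb$ directly. The only genuinely delicate input is then the identification $\fc/\vp\fc \cong \overline{\fc}$, which rests on the Cohen--Macaulayness of $A$ and $\OO$-flatness. To justify it, I would use $\Ext^{1}_{C}(A,C)=0$ together with the regularity of $\vp$ on $A$ and $C$.
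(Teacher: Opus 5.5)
Your proposal is correct and follows the paper's argument. You get $\fb \subset \fc$ from the field case in characteristic zero plus torsionfreeness, and equality from \Cref{cor:c-is-prod-Q-over-field} over $k$ via graded Nakayama; this is exactly what the paper's one-line appeal to $\overline{C}$ and $C[\vp^{-1}]$ leaves implicit.

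It can be streamlined in two places. First, use $K = \OO[\vp^{-1}]$ directly instead of detouring through $\mathbb{Z}$: since $\vp$ is $C$-regular by \Cref{cor:C-is-ci-over-O}, one has $JK[X] \cap P = J$. Second, of $\fc/\vp\fc \cong \overline{\fc}$ you only use injectivity, i.e.\ $\fc \cap \vp C = \vp\fc$. This holds simply because $\vp$ is $C$-regular and $\fc$ is an annihilator ideal, so no $\Ext^{1}$-vanishing or Cohen--Macaulayness of $A$ is needed.
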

	\begin{proof} 
		\Cref{cor:c-is-prod-Q-over-field} says that the equality holds after base-changing to either $\overline{C}$ or $C[\vp^{-1}]$, 
		but then equality must hold over $C$ as well.
	\end{proof}

	We now construct smooth $\OO$-points for $A$, i.e.,
	construct $\OO$-algebra maps $\lambda \colon A \onto \OO$ such that $A$ is regular at~$\lambda$. 
	An $\OO$-algebra map 
	$\lambda_{\aa} \colon A \to \OO$ is the same as choosing an 
	$m \times n$ matrix $\aa \in \OO^{m \times n}$ 
	with $\II_{t + 1}(\aa) = 0$, i.e., $\rank \aa \le t$. 
	For $\aa \in \OO^{m \times n}$, 
	we define $w_{\aa} \in \mathbb{N} \cup \{\infty\}$ as 
	\begin{align*} 
		w_{\aa} 
		\coloneqq&\
		\length_{\OO} (\OO/\II_{t}(\aa))
		\\
		=&\ 
		\min \{\val(\det(\aa')) : \text{$\aa'$ is a $t \times t$ submatrix of $\aa$}\}.
	\end{align*}

	As $\vp \notin \fp$, localising at $\fp$ inverts $\vp$, and thus 
	$A_{\fp} \cong (K[X]/\II_{t+1}(X))_{\fp}$. 
	By \cite[Proposition~7.3.4]{BrunsHerzog:Book}, $A$ is regular at $\lambda_{\aa}$ if and only if $\II_{t}(\aa) \neq 0$, i.e., $\rank \aa = t$.
	Assume now that $\rank \aa = t$,
	in which case, we also have
	\begin{equation} \label{eq:w-alt}
		w_{\aa} = \length_{\OO} (\tors(\coker \aa)).
	\end{equation}

	Next, we note that $\length_{\OO}(\Psi_{\lambda_{\aa}}(A))$, $\delta_{\lambda_{\aa}}(A)$, and $w_{\aa}$ are unchanged under the action of
	$\GL_{m}(\OO) \times \GL_{n}(\OO)$, i.e., modifying $\aa$ by (invertible) row and column operations does not change these integers. 
	Thus, we may first assume that the $\OO$-point is in Smith normal form, i.e., is of the form
	\begin{equation*} 
		\begin{bmatrix} 
		D_{t \times t} & O_{t \times (n - t)} \\
		O_{1 \times t} & O_{1 \times (n - t)}
		\end{bmatrix},
		\quad
		\text{where}
		\quad
		D = 
		\diag(\vp^{a_{1}}, \ldots, \vp^{a_{t}}) 
		= 
		\begin{bmatrix} 
		\vp^{a_{1}} & & \\
		 & \ddots & \\
		 & & \vp^{a_{t}}
		\end{bmatrix},
	\end{equation*}
	with $a_{1} \le \cdots \le a_{t}$, 
	and the $O$s are zero matrices of the indicated size.  
	However, this choice of $\OO$-point may not be regular for $C$. 
	Thus, for what is to come, 
	we again use columns operations and assume that the point is
	\begin{equation} \label{eq:desc-a}
		\aa = 
		\begin{bmatrix} 
		D & D & \cdots & D & D_{[1,\; n \bmod t]} \\
		O & O & \cdots & O & O
		\end{bmatrix},
	\end{equation}
	i.e., the last row of $\aa$ is zero, 
	and the first $t$ rows consist of cyclically repeating $D$. 
	We set $\Delta \coloneqq \det D$, and note that $\val(\Delta) = w_{\aa}$. 

	\begin{ex}
		When $m = 2$, the matrix $\aa$ takes the form 
		\begin{equation*} 
			\begin{bmatrix}
				\vp^{a} & \vp^{a} & \cdots & \vp^{a} \\ 
				0 & 0 & \cdots & 0
			\end{bmatrix}.
		\end{equation*}

		When $m = 3$, the matrix $\aa$ takes one of the following forms, depending on whether $n$ is even or odd:
		\begin{equation*} 
		\begin{bmatrix}
			\vp^{a} & 0 & \vp^{a} & 0 & \cdots & \vp^{a} & 0 \\ 
			0 & \vp^{b} & 0 & \vp^{b} & \cdots & 0 & \vp^{b} \\ 
			0 & 0 & 0 & 0 & \cdots & 0 & 0
		\end{bmatrix}
		\quad
		\text{or}
		\quad
		\begin{bmatrix}
			\vp^{a} & 0 & \vp^{a} & 0 & \cdots & \vp^{a} \\ 
			0 & \vp^{b} & 0 & \vp^{b} & \cdots & 0 \\ 
			0 & 0 & 0 & 0 & \cdots & 0 
		\end{bmatrix}.
		\qedhere
		\end{equation*}
	\end{ex}

	We now fix the point $\aa$ once and for all, as in \Cref{eq:desc-a}. 
	We set $\lambda_{A} \coloneqq \lambda_{\aa}$, and let
	$\lambda_{C}$ and $\lambda_{P}$ denote the induced maps from 
	$C$ and $P$, respectively. 
	Thus, we have a commutative diagram
	\begin{equation*} 
		\begin{tikzcd}
			P \arrow[r, two heads] \arrow[rd, "\lambda_{P}"', two heads] & C \arrow[r, two heads] \arrow[d, "\lambda_{C}" description, two heads] & A \arrow[ld, "\lambda_{A} = \lambda_{\aa}", two heads] \\[1em]
			& \OO.  &                                       
		\end{tikzcd}
	\end{equation*}

	As noted earlier, 
	$A$ is regular at $\lambda_{A}$ because $\rank \aa = t$. 
	We will show that $C$ is regular at $\lambda_{C}$ as well. 
	As we shall need the Jacobian for this and computing the cotangent module, 
	we first analyse the partial derivatives of $\pmb{f}$.

	\begin{lem} \label{lem:calculating-derivatives}
		The evaluation $\lambda_{P}(\partial f_{k} / \partial X_{i j})$ is nonzero 
		if and only if 
		$X_{i j} \in \{X_{m, k},\, X_{m, k + t}\}$, 
		in which case, the evaluation equals $\pm \Delta$.
	\end{lem}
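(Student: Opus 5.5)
The plan is to compute $\partial f_{k}/\partial X_{ij}$ by Laplace (cofactor) expansion and then evaluate at $\aa$. Since $f_{k} = \det X_{[k, k+t]}$ and every variable appears linearly in a determinant, $\partial f_{k}/\partial X_{ij}$ is zero unless $k \le j \le k+t$. When $j$ is in this range, it equals $\pm$ the $t \times t$ minor of $X_{[k,k+t]}$ obtained by deleting row $i$ and column $j$. So $\lambda_{P}(\partial f_{k}/\partial X_{ij})$ is, up to sign, the minor of the $m \times m$ matrix $\aa_{[k,k+t]}$ with row $i$ and column $j$ deleted.

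First dispose of the rows. If $i \ne m$, the surviving $t$ rows of $\aa_{[k,k+t]}$ include the last row, which is zero, so the minor vanishes. Hence only $i = m$ contributes, and the minor is then the determinant of the top $t \times t$ block of the $t+1$ columns $k, \ldots, k+t$ with column $j$ removed.

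Next handle the columns using the cyclic structure in \Cref{eq:desc-a}. The top part of column $\ell$ of $\aa$ is $\vp^{a_{r}} e_{r}$, where $r \equiv \ell \pmod t$ with $r \in \{1, \ldots, t\}$; this includes the truncated final block $D_{[1,\, n \bmod t]}$. Among $t+1$ consecutive columns $k, \ldots, k+t$, the residues mod $t$ therefore cover every residue class, and the residue of $k$ occurs twice, at positions $k$ and $k+t$. All other residues occur exactly once.

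Now split into two cases. Deleting a column $j \notin \{k, k+t\}$ leaves a repeated column (columns $k$ and $k+t$ agree), together with a missing standard direction, so the determinant is $0$. Deleting $j = k$ or $j = k+t$ leaves exactly the columns $\vp^{a_{r}} e_{r}$ for $r = 1, \ldots, t$, each once, in some order. Their determinant is $\pm \prod \vp^{a_{r}} = \pm \Delta$. This proves both directions of the claim.

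The main point requiring care is checking that the cyclic repetition really places the duplicated residue at positions $k$ and $k+t$ for every $k \le n-t$, including near the truncated end block. This holds because column $\ell$ depends only on $\ell \bmod t$, and both $k$ and $k+t$ are at most $n$. The precise signs are irrelevant for the statement.
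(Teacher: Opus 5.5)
Your proof is correct and follows essentially the same route as the paper: you identify the derivative with a cofactor of $\aa_{[k,k+t]}$, use the zero last row to force $i = m$, use the equality of columns $k$ and $k+t$ to force $j \in \{k, k+t\}$, and observe that the surviving cofactor is a column permutation of $D$. Your explicit residue-mod-$t$ bookkeeping, including the truncated block $D_{[1,\, n \bmod t]}$, simply spells out what the paper leaves implicit.
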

	Pictorially, the evaluation is nonzero if and only if $X_{i j}$ is one of the two bottom corner variables of $X_{[k, k + t]}$. 
	\begin{proof} 
		Fixing $k$, we let $X'$ denote the square matrix $X_{[k, k + t]}$, so that $f_{k} = \det X'$. 
		It is clear that $\partial f_{k} / \partial X_{i j}$ is zero if $X_{i j}$ does not appear in $X'$. 
		We may now assume that $X_{i j}$ does appear in $X'$. 
		Recall that if $Y$ is a generic square matrix, 
		then $\partial \det Y/  \partial Y_{i j}$ equals the $(i, j)$-th cofactor of $Y$. 
		Thus, $\lambda_{P}(\partial f_{k}/\partial X_{i j})$ may be computed as the corresponding cofactor of $\lambda_{P}(X')$. 
		As the last row of $X'$ is zero, it is clear that the derivative of $f_{k}$ is zero with respect to any variable not in the last row of $X'$. 
		Similarly, as the first and last columns of $X'$ are identical, 
		the cofactor with respect any variable not in these two columns is again zero. 
		This leaves us only with the variables in the last row that appear in either the first or last column, as claimed in the statement of the lemma. 
		For these two variables, the cofactor computation is clear as the corresponding cofactor matrix is equal to $D$ up to a permutation of the rows.
	\end{proof}

	\begin{por} \label{por:derivatives-max-minors}
		If $g$ is any maximal minor of $X$, then
		$\lambda_{P}(\partial g / \partial X_{i j}) \in \{0, \Delta, -\Delta\}$.
		\qed
	\end{por}

	Let $\JJ$ denote the Jacobian matrix of $\pmb{f}$; see \Cref{rem:cotangent-description} for our conventions. 

	\begin{lem} \label{lem:ideal-of-jacobian}
		As an ideal in $\OO$, we have 
		$\lambda_{P}(\II_{n - t}(\JJ)) = \Delta^{n - t} \OO$. 
	\end{lem}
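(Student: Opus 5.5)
The plan is to read off the shape of $\lambda_{P}(\JJ)$ from \Cref{lem:calculating-derivatives} and then find one explicit $(n-t)$-minor that generates the whole ideal. The matrix $\lambda_{P}(\JJ)$ is $mn \times (n-t)$, with rows indexed by the variables $X_{ij}$ and columns by $f_{1}, \ldots, f_{n-t}$. By \Cref{lem:calculating-derivatives}, every entry of $\lambda_{P}(\JJ)$ lies in $\{0, \Delta, -\Delta\}$. So we may write
\begin{equation*}
	\lambda_{P}(\JJ) = \Delta \cdot M,
\end{equation*}
where $M$ is an $mn \times (n-t)$ matrix with entries in $\{0, 1, -1\}$. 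In column $k$ of $M$, the only nonzero entries are in the rows indexed by $X_{m,k}$ and $X_{m,k+t}$.

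The containment $\lambda_{P}(\II_{n-t}(\JJ)) \subset \Delta^{n-t}\OO$ follows by expanding any $(n-t)$-minor multilinearly. Each of its $n-t$ columns is divisible by $\Delta$, so each $(n-t)$-minor of $\Delta M$ equals $\Delta^{n-t}$ times the corresponding minor of $M$. Moreover, $\lambda_{P}$ commutes with taking minors, since it is a ring map. It therefore suffices to show that $\II_{n-t}(M) = \OO$, i.e., that some $(n-t)$-minor of $M$ is a unit.

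For the reverse containment, select the rows of $M$ indexed by $X_{m,1}, X_{m,2}, \ldots, X_{m,n-t}$, in that order, giving a square $(n-t)\times(n-t)$ submatrix $M'$. Its $(j,k)$ entry is nonzero only if $j = k$ or $j = k + t$. Since $t = m-1 \ge 1$, every nonzero entry satisfies $j \ge k$, so $M'$ is lower triangular. Its diagonal entries, at $j = k$, are the $\pm 1$ coming from $X_{m,k}$ in \Cref{lem:calculating-derivatives}. Hence $\det M' = \pm 1$, and so $\pm\Delta^{n-t} \in \lambda_{P}(\II_{n-t}(\JJ))$, which gives equality.

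There is no real obstacle beyond this bookkeeping. The one point to check carefully is that, in column $k$, the entry at $X_{m,k+t}$ either lies strictly below the diagonal of $M'$ (when $k+t \le n-t$) or falls outside the selected rows altogether. In both cases it cannot spoil triangularity, because $t \ge 1$.
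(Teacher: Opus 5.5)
Your proposal is correct and follows essentially the same route as the paper. In both, divisibility of every entry by $\Delta$ gives one containment, and the rows indexed by $X_{m,1}, \ldots, X_{m,n-t}$ give a lower-triangular submatrix with diagonal entries $\pm\Delta$, hence the minor $\pm\Delta^{n-t}$. Writing $\lambda_{P}(\JJ) = \Delta M$ is only a cosmetic repackaging of that argument.
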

	\begin{proof} 
		By \Cref{lem:calculating-derivatives}, 
		every entry of 
		$\lambda_{P}(\JJ)$ is divisible by $\Delta$, and hence
		we have $\lambda_{P}(\II_{n - t}(\JJ)) \subset \Delta^{n - t} \OO$. 
		To show equality, it suffices to pick $n - t$ rows of $\JJ$ such that the corresponding square submatrix has determinant $\pm \Delta^{n-t}$. 
		Such a choice of $n - t$ rows corresponds to a choice of $n - t$ variables; 
		we choose $y_{j} \coloneqq X_{m, j}$ for $1 \le j \le n - t$, i.e.,
		the bottom-left variable of the submatrix defining $f_{j}$. 
		Letting 
		$\JJ' \coloneqq [\partial f_{j} / \partial y_{i}]_{i j}$ denote the corresponding submatrix, 
		\Cref{lem:calculating-derivatives} yields
		\begin{equation*} 
			\lambda_{P}(\JJ'_{i j})
			=
			\lambda_{P}\left(\partial f_{j} / \partial y_{i}\right)
			=
			\begin{cases}
				\pm \Delta & \text{ if $i = j$ }, \\
				0 & \text{ if $j > i$ }.
			\end{cases}
		\end{equation*}
		Thus, $\lambda_{P}(\JJ')$ is a triangular matrix, 
		and its determinant may be computed as the product of its diagonal elements, 
		giving us $\det(\lambda_{P}(\JJ')) = \pm \Delta^{n - t}$, as desired.
	\end{proof}

	\begin{cor} \label{cor:coker-J-desc}
		As an $\OO$-module, we have 
		$\coker(\lambda(\JJ)) \cong \OO^{m n - (n - t)} \oplus T$, 
		where $T$ is the torsion submodule. 
		Moreover, $\length_{\OO} T = (n - t) w_{\aa}$.
	\end{cor}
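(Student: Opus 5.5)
The plan is to identify $\coker(\lambda(\JJ))$ from the shape of the matrix $\lambda_{P}(\JJ)$, which is an $mn \times (n-t)$ matrix over $\OO$. By \Cref{lem:ideal-of-jacobian}, the ideal of maximal minors $\II_{n-t}(\lambda_{P}(\JJ)) = \Delta^{n-t}\OO$ is nonzero. Hence $\lambda_{P}(\JJ)$ has full column rank $n - t$, and by the structure theorem for finitely generated modules over the PID $\OO$, its cokernel is $\OO^{mn - (n-t)} \oplus T$ with $T$ torsion. This gives the first assertion. It remains to compute $\length_{\OO} T$.

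For the length, put $\lambda_{P}(\JJ)$ in Smith normal form, with invariant factors $d_{1} \mid \cdots \mid d_{n-t}$. Then $T \cong \bigoplus_{i} \OO/d_{i}\OO$, and $\length_{\OO} T = \sum_{i} \val(d_{i})$. Row and column operations preserve the ideal of maximal minors, so this sum equals $\val$ of a generator of $\II_{n-t}(\lambda_{P}(\JJ))$. By \Cref{lem:ideal-of-jacobian} that value is $\val(\Delta^{n-t}) = (n-t)\val(\Delta) = (n-t) w_{\aa}$, using $\val(\Delta) = w_{\aa}$ as noted after \Cref{eq:desc-a}.

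As a sanity check, or as an alternative route, one can read the structure off \Cref{lem:calculating-derivatives} directly. Every entry of $\lambda_{P}(\JJ)$ is divisible by $\Delta$, so $\lambda_{P}(\JJ) = \Delta M$. The triangular submatrix found in \Cref{lem:ideal-of-jacobian} shows that $M$ has a unit maximal minor. Therefore $M$ is a split injection with free cokernel, and the cokernel of $\Delta M$ is $\OO^{mn-(n-t)} \oplus (\OO/\Delta)^{n-t}$.

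I expect no real obstacle. The only point needing care is the standard fact that the length of the torsion in the cokernel of a full-rank matrix over a DVR equals the valuation of its maximal-minor ideal (the Fitting ideal). The second argument sidesteps this fact and gives the even sharper description $T \cong (\OO/\Delta\OO)^{n-t}$.
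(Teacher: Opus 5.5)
Your proof is correct, and your main argument is essentially the paper's: \Cref{lem:ideal-of-jacobian} gives a nonzero maximal minor, hence full column rank and a free part of rank $mn-(n-t)$; the Smith normal form then shows that the torsion length equals the valuation of the generator $\Delta^{n-t}$ of the maximal-minor ideal, namely $(n-t)w_{\aa}$. Your alternative argument via $\lambda_{P}(\JJ) = \Delta M$, with $M$ having a unit maximal minor, is also valid (it needs only $\Delta \neq 0$, which holds because $\rank \aa = t$) and yields the sharper description $T \cong (\OO/\Delta\OO)^{n-t}$, which the paper does not record.
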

	\begin{proof} 
		\Cref{lem:ideal-of-jacobian} tells us that $\lambda_{P}(\JJ)$ has full rank, so that
		the Smith normal form of $\lambda_{P}(\JJ)$ is 
		$\smatrix{M \\ O}$ where $M$ is an $(n - t) \times (n - t)$ diagonal matrix with nonzero determinant. 
		Thus, the rank of the cokernel is $m n - (n - t)$, 
		and the torsion part is measured by $(n-t)$-sized minors, 
		which is again computed in \Cref{lem:ideal-of-jacobian}. 
	\end{proof}

	\begin{cor} \label{cor:cotangent-C-description}
		The $\OO$-algebra $C$ is regular at $\lambda_{C}$, and
		$\length_{\OO} \Phi_{\lambda_{C}}(C) = (n - t) w_{\aa}$. 
	\end{cor}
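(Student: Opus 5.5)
Both claims should follow by combining \Cref{rem:cotangent-description} with \Cref{cor:coker-J-desc}. Write $\fp_{C} \coloneqq \ker \lambda_{C}$. Since $C = P/(f_{1}, \ldots, f_{n-t})$ and $P$ is a polynomial ring in $mn$ variables over $\OO$, \Cref{rem:cotangent-description} presents $\fp_{C}/\fp_{C}^{2}$ as $\coker(\lambda_{P}(\JJ))$. By \Cref{cor:coker-J-desc}, this is $\OO^{mn-(n-t)} \oplus T$ with $\length_{\OO} T = (n-t)w_{\aa}$. Hence $\Phi_{\lambda_{C}}(C) = \tors(\fp_{C}/\fp_{C}^{2}) = T$, which gives the length statement immediately.

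For regularity, I would show that $C_{\fp_{C}}$ is regular.
\begin{enumerate}
\item Localising at $\fp_{C}$ inverts $\vp$, since $\vp \notin \fp_{C}$. So $C_{\fp_{C}}$ is a localisation of $C_{K} = K[X]/J$, and $\fp_{C}$ corresponds to the $K$-rational point $\aa$.
\item Tensoring the presentation with $K$ gives $\fp_{C}C_{\fp_{C}}/\fp_{C}^{2}C_{\fp_{C}} \cong \coker(\lambda_{P}(\JJ)) \otimes_{\OO} K \cong K^{mn-(n-t)}$.
\item The residue field of $C_{\fp_{C}}$ is $K$, so the embedding dimension of $C_{\fp_{C}}$ is $mn-(n-t)$.
\item By \Cref{cor:C-is-ci-over-O}, $\pmb{f}$ is a regular sequence in $P$. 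So $\dim C_{\fp_{C}} = \htt(\fp_{C}) = \htt(\fp_{P}) - (n-t)$, where $\fp_{P} \coloneqq \ker\lambda_{P}$.
\item The ideal $\fp_{P}$ is the kernel of $P \onto \OO$ given by evaluation at $\aa$. It is generated by the $mn$ elements $X_{ij} - a_{ij}$ and contracts to $0$ in $\OO$, so $P_{\fp_{P}}$ is a localisation of $K[X]$ at a $K$-rational maximal ideal and has height $mn$. Therefore $\dim C_{\fp_{C}} = mn-(n-t)$.
\end{enumerate}
The embedding dimension equals the Krull dimension, so $C_{\fp_{C}}$ is regular.

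Equivalently, one can use the Jacobian criterion for $C_{K}$ at the rational point $\aa$. It requires $\lambda_{P}(\JJ)$ to have rank $n-t = \htt(J)$ over $K$, and this holds because $\lambda_{P}(\II_{n-t}(\JJ)) = \Delta^{n-t}\OO \neq 0$ by \Cref{lem:ideal-of-jacobian}, using $\Delta \neq 0$ from $\rank \aa = t$. I would present this as the main argument, since the criterion applies to a polynomial ring over the field $K$.

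The one point that needs care is the dimension bookkeeping in step 5: identifying the height of $\fp_{P}$ with $mn$ and checking that regularity of $\pmb{f}$ passes to the localisation, which it does because $P_{\fp_{P}}$ is flat over $P$. Everything else is a direct reading of the earlier lemmas.
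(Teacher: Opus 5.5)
Your proposal is correct and follows the paper's proof. Both read off $\Phi_{\lambda_{C}}(C)$ as the torsion of $\coker(\lambda_{P}(\JJ))$ via \Cref{rem:cotangent-description} and \Cref{cor:coker-J-desc}, and both get regularity by comparing $\rank(\fp/\fp^{2}) = mn-(n-t)$ with $\htt \fp = mn-(n-t)$; your steps 1--5 just spell out the localisation and height bookkeeping that the paper leaves implicit.
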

	\begin{proof} 
		Let $\fp \coloneqq \ker \lambda_{C}$, so that $V \coloneqq \fp/\fp^{2}$ is the cotangent module of $C$, 
		and we have $\Phi_{\lambda_{C}}(C) = \tors(V)$. 
		Note that $C_{\fp}$ is regular precisely when its embedding dimension is equal to its Krull dimension, i.e.,
		precisely when $\rank V = \htt \fp$. 
		In view of \Cref{cor:C-is-ci-over-O}, we know that $\htt \fp = m n - (n - t)$.
		Both assertions now follow as we have $V \cong \coker(\lambda(\JJ))$ 
		by \Cref{rem:cotangent-description}. 
	\end{proof}

	\begin{lem} \label{lem:cotangent-A-description}
		The equality $\length_{\OO} \Phi_{\lambda_{A}}(A) = (n - t) w_{\aa}$ holds.
	\end{lem}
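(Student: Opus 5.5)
We compute the cotangent module of $A$ at $\lambda_{A}$ through the Jacobian presentation of \Cref{rem:cotangent-description}. The ideal $I$ is generated by all maximal minors $g$ of $X$. Let $\JJA$ be the $mn \times \binom{n}{m}$ Jacobian matrix of these minors. Then $\Phi_{\lambda_{A}}(A)$ is the torsion part of $\coker \lambda_{P}(\JJA)$.

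Since $A$ is regular at $\lambda_{A}$ and $\htt \ker \lambda_{A} = mn - (n-t)$, the free rank of this cokernel is $mn-(n-t)$. Hence $\lambda_{P}(\JJA)$ has rank exactly $n-t$. The torsion length is then $\val$ of a generator of the ideal $\lambda_{P}(\II_{n-t}(\JJA))$ of maximal-size nonvanishing minors, which is the same Smith normal form argument as in \Cref{cor:coker-J-desc}. The goal is therefore to show that
\begin{equation*}
\lambda_{P}(\II_{n-t}(\JJA)) = \Delta^{n-t}\OO .
\end{equation*}

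For the containment $\subset$: by \Cref{por:derivatives-max-minors}, every entry of $\lambda_{P}(\JJA)$ lies in $\{0,\pm\Delta\}$. Hence every $(n-t)$-minor is divisible by $\Delta^{n-t}$.

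For the containment $\supset$: the columns of $\JJA$ include those of $\JJ$, since each $f_{k}$ is a maximal minor. So $\II_{n-t}(\JJ) \subset \II_{n-t}(\JJA)$, and \Cref{lem:ideal-of-jacobian} supplies a minor equal to $\pm\Delta^{n-t}$. Together these give $\length_{\OO}\Phi_{\lambda_{A}}(A) = (n-t)\val(\Delta) = (n-t)w_{\aa}$.

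The step needing the most care is the rank claim: $\lambda_{P}(\JJA)$ must have rank exactly $n-t$, so that no larger minors contribute and the torsion is governed by the $(n-t)$-minors. This follows from regularity of $A_{\fp}$. Alternatively, one checks directly that the column spans over $K$ of $\lambda_{P}(\JJA)$ and $\lambda_{P}(\JJ)$ agree, because $J_{\fp}=I_{\fp}$ after inverting $\vp$ and localising, as both rings are regular of the same dimension.

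With rank $n-t$ in hand, the Smith normal form of $\lambda_{P}(\JJA)$ has exactly $n-t$ nonzero invariant factors. Their product generates $\lambda_{P}(\II_{n-t}(\JJA))$, so the torsion length is $\val(\Delta^{n-t})$, which completes the argument.
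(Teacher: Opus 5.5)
Your proposal is correct and follows the paper's sketch. Both arguments identify $\Phi_{\lambda_{A}}(A)$ with the torsion of $\coker \lambda_{P}(\JJA)$ and obtain $\II_{n-t}(\lambda_{P}(\JJA)) = \Delta^{n-t}\OO$ the same way: the porism gives $\subset$, and $\JJ$ being a column-submatrix of $\JJA$, together with the ideal-of-Jacobian lemma, gives $\supset$. Your explicit argument that $\lambda_{P}(\JJA)$ has rank exactly $n-t$, via regularity of $A$ at $\lambda_{A}$, fills in a step the paper leaves implicit when it says one is ``in the same position'' as for $C$.
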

	\begin{proof}[Sketch] 
		The proof follows similarly as that for $C$. 
		As before, $\Phi_{\lambda_{A}}(A)$ is the torsion submodule of 
		$\coker(\lambda_{P}(\JJA))$, where
		$\JJA$ is the Jacobian matrix with respect to the maximal minors of $X$.
		Noting that $\JJA$ contains $\JJ$ as a submatrix, 
		\Cref{por:derivatives-max-minors} and \Cref{lem:ideal-of-jacobian} give us 
		$\II_{n - t}(\lambda_{P}(\JJA)) =
		\II_{n - t}(\lambda_{P}(\JJ))$, 
		so we are now in the same position as 
		\Cref{cor:coker-J-desc,cor:cotangent-C-description}. 
	\end{proof}

	\begin{lem} \label{lem:colength-c}
		We have $\lambda_{C}(\fc) = \Delta^{n - m} \OO$;
		equivalently, $\length_{\OO}(\OO/\lambda_{C}(\fc)) = (n - m) w_{\aa}$. 
	\end{lem}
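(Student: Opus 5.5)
By \Cref{cor:c-is-prod-Q-over-DVR}, $\fc = Q_{2} \cdots Q_{n-t}$. Since $\lambda_{C}$ is a ring map, this gives
\[
\lambda_{C}(\fc) = \prod_{i=2}^{n-t} \lambda_{C}(Q_{i}) = \prod_{i=2}^{n-t} \lambda_{P}(\fQ_{i}).
\]
Each $\lambda_{P}(\fQ_{i})$ is the ideal $\II_{t}(\aa_{[i, i+t-1]})$ of $t$-sized minors of the $m \times t$ submatrix of $\aa$ on columns $i, \ldots, i+t-1$. The key step is therefore to show that for every $2 \le i \le n-t$ this ideal equals $\Delta\OO$. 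The factors then multiply to $\Delta^{n-t-1}\OO = \Delta^{n-m}\OO$, since $n - t - 1 = n - m$. The length statement follows because $\val(\Delta) = w_{\aa}$.

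To compute $\II_{t}(\aa_{[i,i+t-1]})$, I would use the explicit shape \Cref{eq:desc-a}. The last row of $\aa$ is zero, so any $t$-minor of this submatrix using the last row vanishes. The only possibly nonzero minor is therefore the one on the first $t$ rows, namely the determinant of $t$ consecutive columns of the cyclically repeated block $D$. Because the first $t$ rows repeat $D$ with period $t$, the $t$ consecutive columns $i, \ldots, i+t-1$ are, up to a cyclic permutation, exactly the $t$ columns of $D$. This holds as long as all these columns lie in the range where the pattern is defined, and $i + t - 1 \le n - 1$ guarantees this. The determinant is then $\pm\Delta$, so $\lambda_{P}(\fQ_{i}) = \Delta\OO$.

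The only delicate point is the truncated last block $D_{[1,\, n \bmod t]}$. I need to confirm that columns $i, \ldots, i+t-1$ with $i+t-1 \le n-1$ still form a full cyclic set of residues modulo $t$. They do: the pattern is simply that column $j$ equals column $((j-1) \bmod t) + 1$ of $D$ extended by zero, and any $t$ consecutive indices hit every residue class exactly once. So the truncation causes no trouble, and I expect this to be the only real bookkeeping in the argument. Otherwise the proof rests on multiplicativity of images of ideal products under $\lambda_{C}$ and on \Cref{cor:c-is-prod-Q-over-DVR}.
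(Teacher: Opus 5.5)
Your proposal is correct and follows the paper's proof: use \Cref{cor:c-is-prod-Q-over-DVR} to reduce to showing $\lambda_{C}(Q_{i}) = \Delta\OO$ for $2 \le i \le n-t$, then observe that any $t$ consecutive columns of $\aa$ are, up to a permutation, the columns of $\smatrix{D \\ O}$. Your explicit check that the truncated block $D_{[1,\, n \bmod t]}$ causes no trouble is a detail the paper leaves implicit. That check rests on the fact that column $j$ of $\aa$ is column $((j-1) \bmod t)+1$ of $\smatrix{D \\ O}$.
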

	\begin{proof} 
		As $\fc = Q_{2} \cdots Q_{n - t}$ by \Cref{cor:c-is-prod-Q-over-DVR}, 
		it suffices to show that $\lambda_{C}(Q_{i}) = \Delta \OO$ for each $i \in [2, n-t]$. 
		Recalling that $Q_{i} = \II_{t}(X_{[i, i+t-1]})$, we obtain
		$\lambda_{C}(Q_{i}) 
		= \II_{t}(\lambda_{C}(X_{[i, i+t-1]}))$. 
		But $\lambda_{C}(X_{[i, i+t-1]}) = \aa_{[i, i+t-1]}$ consists of $t$ consecutive columns of $\aa$, 
		and, is thus, 
		equal to $\smatrix{D \\ O}$ up to a permutation of columns, giving us 
		$\lambda_{C}(Q_{i}) = (\det D) \OO = \Delta \OO$ as desired.
	\end{proof}

	\begin{cor} \label{cor:length-congruence}
		We have $\Psi_{\lambda_{\aa}}(A) \cong \OO/\II_{t}(\aa)$
		and
		$\delta_{\lambda_{\aa}}(A) = (n - m) w_{\aa}$.
	\end{cor}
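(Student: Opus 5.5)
The plan is to apply \Cref{thm:blackbox} to the surjection $\pi \colon C \onto A$, after localising at the maximal ideals determined by $\lambda_{C}$ and $\lambda_{A}$. Three hypotheses need checking.

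First, both pairs must lie in $\CC_{\OO}(c)$ for the same $c$. \Cref{cor:cotangent-C-description} gives that $C$ is regular at $\lambda_{C}$, and regularity of $A$ at $\lambda_{A}$ follows from $\rank \aa = t$. For the heights, $\htt(\ker\lambda_{C}) = mn - (n-t)$ by \Cref{cor:C-is-ci-over-O}. Since $\dim A = \dim C$ (both are of relative dimension $mn - (n-t)$ over $\OO$, as $\htt I = \htt J = n - t$) and both are catenary domains or equidimensional, $\htt(\ker\lambda_{A})$ equals the same number $mn - (n-t)$. Here $\ker \lambda_{A}$ is the image of a prime of $P$ of height $mn$ containing $I$, and $A$ is Cohen--Macaulay, hence equidimensional, so heights subtract correctly.

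Second, $C$ must be a complete intersection and $A$ Cohen--Macaulay. The former is \Cref{cor:C-is-ci-over-O}. The latter holds because the determinantal ring over the DVR $\OO$ is Cohen--Macaulay (\cite{BrunsHerzog:Book}), and this passes to the localisations.

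Third, the annihilator $\fc$ must localise correctly. Localisation commutes with $\ann$ of the finitely generated ideal $\ker\pi$, so the local annihilator is $\fc C_{\fm_{C}}$, and its image under $\lambda_{C}$ is still $\lambda_{C}(\fc)$.

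With the hypotheses in place, \Cref{thm:blackbox} combined with \Cref{cor:cotangent-C-description} and \Cref{lem:colength-c} gives
\begin{equation*}
\length_{\OO}\Psi_{\lambda_{A}}(A) = (n-t)w_{\aa} - (n-m)w_{\aa} = (t - m + n - n + 1)\,w_{\aa} \cdot 0 + w_{\aa} = w_{\aa},
\end{equation*}
that is, $(n-m+1)w_{\aa} - (n-m)w_{\aa} = w_{\aa}$. Since $\Psi_{\lambda_{A}}(A)$ is a cyclic torsion $\OO$-module (recalled in \Cref{sec:background}), it is determined by its length, so $\Psi_{\lambda_{A}}(A)\cong \OO/\vp^{w_{\aa}}\OO = \OO/\II_{t}(\aa)$. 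Then \Cref{lem:cotangent-A-description} gives $\delta_{\lambda_{\aa}}(A) = (n-t)w_{\aa} - w_{\aa} = (n-m)w_{\aa}$.

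All of this was for the normalised $\aa$ of \Cref{eq:desc-a}. To pass to a general $\aa$ of rank $t$, I will use the $\GL_{m}(\OO)\times\GL_{n}(\OO)$-invariance noted earlier. This action induces automorphisms of $A$ carrying $\lambda_{\aa}$ to $\lambda_{g\aa h}$, and it preserves the ideal $\II_{t}(\aa)$.

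The main obstacle I expect is the bookkeeping in the first step, namely confirming that $\htt(\ker\lambda_{A}) = \htt(\ker\lambda_{C})$ as required for $\CC_{\OO}(c)$. I would argue this via $\dim A_{\fp} = \dim C_{\fp}$: both equal the rank of the respective torsion-free part of the cotangent module by regularity. For $A$ this rank can be read off from $\coker\lambda_{P}(\JJA)$, whose ideal of $(n-t)$-minors is nonzero and whose $(n-t+1)$-minors vanish, the latter because $\rank \JJA \le \htt I$ on the regular locus. So both ranks equal $mn-(n-t)$.
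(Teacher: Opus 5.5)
Your proposal is correct and follows the paper's proof: you apply \Cref{thm:blackbox} to $\pi$, combine \Cref{cor:cotangent-C-description} and \Cref{lem:colength-c} to get length $w_{\aa}$, invoke cyclicity of $\Psi_{\lambda_{A}}(A)$, and read off $\delta$ from \Cref{lem:cotangent-A-description}. Your extra checks (equal heights, Cohen--Macaulayness of $A$, localising $\fc$, the $\GL_{m}(\OO)\times\GL_{n}(\OO)$ reduction) are sound and just make explicit what the paper leaves implicit, though the displayed chain with the spurious factor $\cdot\, 0$ should simply read $(n-t)w_{\aa} - (n-m)w_{\aa} = (m-t)w_{\aa} = w_{\aa}$.
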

	\begin{proof} 
		Applying \Cref{thm:blackbox} to the surjection 
		$\pi \colon (C, \lambda_{C}) \onto (A, \lambda_{A})$, we obtain
		\begin{align*} 
			\length_{\OO} \Psi_{\lambda_{A}}(A)
			&=
			\length_{\OO} \Phi_{\lambda_{C}}(C)
			-
			\length_{\OO} (\OO/\lambda_{C}(\mathfrak{c})) \\
			&= 
			(n - t) w_{\aa}
			- (n - m) w_{\aa} = w_{\aa} = \length_{\OO}(\OO/\II_{t}(\aa)),
		\end{align*}
		where the second equality uses \Cref{cor:cotangent-C-description} and \Cref{lem:colength-c}. 
		As $\OO/\II_{t}(\aa)$ and $\Psi_{\lambda_{A}}(A)$ are cyclic $\OO$-modules with the same length, they are isomorphic. 
		The computation of the Wiles defect follows from \Cref{lem:cotangent-A-description}. 
	\end{proof}

\printbibliography

\end{document}